\documentclass[reqno]{amsart}

\usepackage{amssymb}
\usepackage[stretch=10,shrink=10]{microtype}
\usepackage[showframe=false, margin = 1.5 in]{geometry}
\usepackage{mathtools}
\usepackage[shortlabels]{enumitem}
\usepackage{theoremref}
\usepackage{tikz}
\usepackage{subfigure, float}
\usepackage{comment}

\renewcommand{\emptyset}{\varnothing}
\newcommand{\E}{\mathbb{E}}

\newcommand{\rootvertex}{\varnothing}

\DeclareMathOperator{\RFM}{RFM}
\DeclareMathOperator{\SFM}{SFM}
\DeclareMathOperator{\FM}{FM}

\newtheorem{thm}{Theorem}[section]
\newtheorem{lemma}[thm]{Lemma}
\newtheorem{prop}[thm]{Proposition}

\newtheorem{conjecture}[thm]{Conjecture}

\theoremstyle{remark}
\newtheorem{remark}[thm]{Remark}
\theoremstyle{definition}

\newtheorem*{example}{Example}

\title{On the minimal drift for recurrence in the frog model on $d$-ary trees}
\author{Chengkun Guo}
\email{chg217@lehigh.edu}
\author{Si Tang}
\email{sit218@lehigh.edu}

\author{Ningxi Wei}
\email{niw318@lehigh.edu}

\address{Department of Mathematics, Lehigh University}

\begin{document}

\allowdisplaybreaks

\begin{abstract} We study the recurrence of one-per-site frog model
  $\FM(d, p)$ on a $d$-ary tree with drift parameter $p\in [0,1]$,
  which determines the bias of frogs' random walks.  We are interested
  in the minimal drift $p_{d}$ so that the frog model is
  recurrent. Using a coupling argument together with a generating
  function technique, we prove that for all $d \ge 2$, $p_{d}\le 1/3$,
  which is the optimal universal upper bound.
\end{abstract}

\maketitle
\section{Introduction}
Let $\mathbb T_{d}$ be a $d$-ary rooted tree where each vertex has $d$
child vertices and one parent vertex except for the root $\rootvertex$
which does not have a parent vertex. We study the standard
one-per-site frog model on $\mathbb T_{d}$ with drift parameter $p$,
which we denote by FM$(d, p)$. The model is defined as follows. At
time $t=0$, each vertex of $\mathbb T_{d}$ other than the root is
occupied by a sleeping frog (``inactive''); the root has a conscious
frog (``active'') at time $t=0$. Active frogs perform independent
random walks according to the following rules: (a) from vertices other
than the root, frogs make ``upward'' jumps (to the parent of the
current vertex) with probability $p$ and ``downward'' jumps (to child
vertices) with probability $(1-p)/d$; and (b) from the root vertex,
frogs always jump downward to one of the child vertices, with
probability $1/d$ each. Whenever an active frog visits a site with a
sleeping frog,  the sleeping frog wakes and begins its own independent
random walk,  following the same rules.  Let $V_{d, p}$ denote the total number of visits to the root in
FM$(d, p)$. It is known that $P(V_{d,p}=\infty)$ is either 0
or 1 \cite{kosygina01, HJJ1}, marking the two phases of FM$(d, p)$, {\em
  transience} and {\em recurrence}, respectively.

The recurrence of the frog model has been studied in various settings.
It depends on the graph structure (e.g., the degree distribution of
vertices), the distribution of the number of frogs initially at each
vertex, and the law of the random walk. Gantert and Schmidt
\cite{nina_drift1} showed that, on the integer lattice $\mathbb Z$, if
the numbers of sleeping frogs on the different vertices are i.i.d. copies of a
random variable $\eta$ and the random walk has a nonzero drift, then
the process is recurrent if and only if $\E(\log \eta)_{+} = \infty$
no matter how strong the drift is. The asymptotic behavior of the range
of the random walks was studied in \cite{ghosh_drift}  in the
transient case. In higher dimensions $d\ge 2$, a similar condition
$\E(\log \eta)^{(d+1)/2}_{+} = \infty$ was proved to be sufficient for
recurrence \cite{dobler_drift}. However, unlike the $d=1$ case, the
frog model with one-per-site setting can be either  transient or
recurrent, where the preference in choosing one direction (e.g.,
$\pm e_1$) and the net drift in the preferred direction are the key
determining factors \cite{nina_drift2}.

In \cite{HJJ1}, Hoffman, Johnson and Junge studied the recurrence of the
one-per-site frog model $\FM_d$ on the $d$-ary tree $\mathbb T_d$, where
active frogs  perform simple, nearest-neighbor random walks. This is the special case
of the frog model $\FM(d, p)$ where $p=1/(d+1)$. They
showed that $\FM_d$ is recurrent for $d=2$ and transient when
$d\ge 5$; it is currently unknown if $\FM_3$ and $\FM_4$ are
recurrent. It was later improved in  \cite{HJJ2, JJ3_log, JJ3_order} that if the
``one-per-site'' setting in $\FM_{d}$ is changed to ``$\Omega(d)$
frogs on each vertex'', then the frog model becomes recurrent for all
$d$.

In this work, we focus on the one-per-site setting. Since $\FM_{d}$ is
transient when $d\ge 5$, it is natural to ask what is the minimal
drift needed for a frog model on a $d$-ary tree to be
recurrent. Define
\[
p_{d}:=\inf\{p: \FM(d, p) \text{ is recurrent}\}.
\]
It is easy to see that $p_{d}\le 1/2$, as simple random walk on
$\mathbb Z$ is recurrent. By estimating the expected number of frogs
that can reach the root, we see that $p_{d}$ must be at least
$1/(d+1)$ for all $d$. Since $\FM_{2} = \FM(2,1/3)$ is recurrent
\cite{HJJ1}, implying $p_{2}\le 1/3$, we thus can conclude
$p_{2}=1/3$.

A better universal lower bound for $p_{d}$ can be obtained by
dominating $\FM(d, p)$ with a branching random walk on
$\mathbb Z_{\ge 0}$ where particles perform independent $p$-biased
random walk and split to two whenever moving to the right. This
branching random walk is equivalent to $\FM(\infty, p)$, in which a
new active frog is added to the process every time a frog moves away
from the root. When $p<q^{\ast}:=\frac{2-\sqrt{2}}{4}$, the branching
random walk is transient \cite{ECP19-frog, HJJ1} and therefore so is
the frog model $\FM(d, p)$. Thus, $p_{d} \ge q^{\ast}$ for all
$d\ge 2$. By refining the branching random walk to approximate two
steps of frog model, one can also show that $p_{d}-q^{\ast} > 0$ for
all $d\ge 2$. Since the branching random walk can be considered as a
frog model with $d=\infty$, it is natural to ask if
$p_{d}\to q^{\ast}$ as $d\to \infty$. A more basic question is
whether $(p_{d})_{d\ge 2}$ converges at all. It is natural to expect
that $p_{d+1} \le p_{d}$, since it would seem that there would be more
active frogs in $\FM(d+1, p)$ than in $\FM(d, p)$. But although
monotonicity of $p_{d}$ is believed to be true for the frog model on
homogeneous trees (as conjectured in \cite{improved}), it is not
generally true for other types of nested graphs \cite{mono}.

\begin{conjecture} 
\label{thm:monotone}
For all $d\ge 2$, $p_{d+1} \le p_{d}$.
\end{conjecture}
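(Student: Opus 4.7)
The plan is to establish a stochastic coupling of $\FM(d, p)$ and $\FM(d+1, p)$ in which the number of root visits in the larger-arity model dominates that in the smaller. If this can be done for every $p$, then recurrence of $\FM(d, p)$ forces recurrence of $\FM(d+1, p)$, yielding $p_{d+1}\le p_d$. I would fix an arbitrary embedding of a rooted $d$-ary subtree $T_d\hookrightarrow \mathbb T_{d+1}$, so that each vertex of $T_d$ has $d$ of its $(d+1)$-many children distinguished as ``matched'' and one as ``extra.'' The aim is then to couple the two processes so that, vertex-by-vertex on $T_d$, the number of activations in $\FM(d+1, p)$ dominates that in $\FM(d, p)$, with frogs in the extra branches contributing bonus activity that only helps returns to the root.

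A useful reframing is that from any non-root vertex, a frog moves up with probability $p$ and down with probability $1-p$ in \emph{both} $\FM(d, p)$ and $\FM(d+1, p)$; only the conditional distribution of \emph{which} child is chosen differs, being uniform over $d$ children in one case and over $d+1$ in the other. This suggests a ``child-choice'' coupling: drive each pair of coupled frogs by the same sequence of up/down decisions, and when descending from a common ancestor in $T_d$ use a rejection rule in which, with probability $d/(d+1)$, the $(d+1)$-ary frog goes to a matched child $i\in\{1,\dots,d\}$ chosen uniformly and the $d$-ary frog goes to the same $i$; with probability $1/(d+1)$ the $(d+1)$-ary frog enters the extra branch and the $d$-ary frog's choice is independently resampled. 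Under this coupling, both frogs move in lockstep until the $(d+1)$-ary frog first dips into an extra branch, where it activates additional sleeping frogs that, upon returning to $T_d$, should generate extra activity in $\FM(d+1, p)$.

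The main obstacle, and presumably the reason the conjecture has remained open, is that this coupling desynchronizes quickly. Once the $(d+1)$-ary frog takes its first excursion through an extra child, its subsequent trajectory inside $T_d$ is no longer forced to match its $d$-ary partner, and one cannot directly compare activation events vertex-by-vertex. Worse, the activation dynamics are genuinely non-local: a sleeping frog in $\FM(d+1, p)$ may be woken \emph{earlier} than in $\FM(d, p)$ by an excursion frog and then wander into an extra branch before it would have contributed to a root visit in $\FM(d, p)$, so bonus activity can, a priori, cost matched activity. Controlling these cascading dependencies appears to require a fine quantitative argument rather than a pure pathwise coupling; the failure of monotonicity on other families of nested graphs \cite{mono} indicates that any successful proof must exploit the specific self-similar structure of $\mathbb T_d$ rather than a generic principle.

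A more promising alternative might be to adapt the generating-function framework developed for the main theorem: encode the distribution of the number of frogs that reach the root in $\FM(d, p)$ via a fixed-point equation on an auxiliary offspring generating function $f_d$, and seek a monotonicity relation between $f_d$ and $f_{d+1}$ implying that the critical $p$ at which the fixed point becomes subprobability is nonincreasing in $d$. Even here, the difficulty reduces to a monotonicity property of a nonlinear recursion, which I expect to be the hard analytic core. I would not expect either route to yield a short proof, and would approach the conjecture with the intermediate goal of first proving $p_{d+1}\le p_d$ asymptotically (for all sufficiently large $d$) using the branching-random-walk comparison with $\FM(\infty, p)$, before attempting the full inductive statement.
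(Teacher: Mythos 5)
This statement is a \emph{conjecture}: the paper does not prove it, and explicitly discusses why the obvious approaches fail. Your proposal, to its credit, does not actually claim a proof either --- it is a survey of strategies together with an honest account of why each one breaks down. So there is no completed argument here to check, and the ``gap'' is simply that nothing is established beyond what the paper already says is open. Your diagnosis of the obstacles is accurate and matches the paper's own: a direct pathwise coupling of $\FM(d,p)$ and $\FM(d+1,p)$ desynchronizes (the paper notes such a coupling is only known when one degree is an integer multiple of the other, as in \cite{ECP19-frog}), and monotonicity genuinely fails on other nested graph families \cite{mono}, so no generic domination principle can work.

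One point worth internalizing: the paper's actual contribution is precisely the second route you sketch --- a generating-function/fixed-point comparison --- but it only delivers the single instance $p_d \le 1/3 = p_2$ for all $d$, by showing the operator $\mathcal A_{d,\frac{d-1}{2d-1}}$ is dominated by $\mathcal A_{2,1/3}$. The reason this does not extend to full monotonicity $p_{d+1}\le p_d$ is that the comparison is anchored at the explicitly solvable base case $d=2$, $p=1/3$, where the three properties of $\mathcal A_2$ (closedness, monotonicity, $\mathcal A_2^n 1 \to 0$) are known; for general consecutive pairs $(d,d+1)$ at the unknown critical drift $p_d$, one has no analogous handle on $\mathcal A_{d,p_d}$. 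Your suggested intermediate goal (asymptotic monotonicity via the $\FM(\infty,p)$ comparison) is reasonable but also not carried out, so the conjecture remains exactly as open as the paper leaves it.
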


Direct coupling of two frog models $\FM(d, p)$ and $\FM(d', p)$
appears to be difficult, except for some special cases, for example
when $d'$ is an integer multiple of $d$ \cite[Proposition
1.2]{ECP19-frog}. In this case, it was proved that $p_{kd}\le p_{d}$
for all $d\ge 2, k\ge 1$, implying the convergence of
$(p_{d})_{d\ge 1}$ along certain subsequences. Unfortunately, the coupling 
can not generalize to other pairs of degrees.

Since $p_{2}=1/3$, Conjecture \ref{thm:monotone}, if true, would imply
that the universal upper bound of $p_{d}$ is $1/3$. Several
improvements to the trivial bound $p_{d}\le 1/2$ have been made: in
\cite{mono}, it was shown $p_{d}\le (d+1)/(2d-2)$, and  in
\cite{ECP19-frog} that $p_{d}\le 0.4155$. But none of these results is
close to the conjectured bound $1/3$. Our main contribution here is
to prove the sharp upper bound for $p_{d}$, i.e.,
\begin{thm} 
\label{thm:upbd}
For all $d\ge 2$, $p_{d} \le \frac{1}{3}$.
\end{thm}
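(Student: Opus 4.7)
The plan is to construct a simplified self-similar frog model $\SFM(d,1/3)$ whose number of visits to $\rvv$ is stochastically dominated by that of $\FM(d,1/3)$, and to show that $\SFM(d,1/3)$ produces infinitely many visits to $\rvv$ with positive probability; stochastic domination then gives the same for $\FM(d,1/3)$, and the 0-1 law from \cite{kosygina01, HJJ1} upgrades this to probability $1$, i.e., recurrence. Concretely, whenever an active frog at $\rvv$ enters a child subtree $T_v$, I would run a restricted exploration that only wakes a controlled subset of the frogs in $T_v$, in such a way that successive explorations (across distinct visits to $\rvv$) can be coupled to be independent. Letting $\widetilde N$ denote the number of frogs returning to $\rvv$ from one such exploration, the total number of visits to $\rvv$ then dominates the total progeny of a Galton--Watson process with offspring law $\widetilde N$, so supercriticality $\E[\widetilde N] > 1$ suffices.

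The generating-function step exploits the self-similarity of $d$-ary subtrees and the fact that the distance-to-root of any frog projects to a $p$-biased walk on $\mathbb Z_{\geq 0}$ whose law is independent of $d$. By gambler's ruin, a single frog at depth $k$ returns to $\rvv$ with probability $(p/(1-p))^k = (1/2)^k$ at $p = 1/3$. The restricted exploration then has a recursive structure: each woken frog either returns to $\rvv$ or escapes into a grandchild subtree, triggering a fresh independent copy of the same exploration. This should yield a functional equation for $F(s) = \E[s^{\widetilde N}]$, from which the mean $\E[\widetilde N] = F'(1)$ can be read off.

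The main obstacle is that the naive count is exactly critical at $p = 1/3$: the active frog riding down from $\rvv$ contributes $1/2$ to $\E[\widetilde N]$, and the newly woken sleeping frog at $v$ contributes another $1/2$, summing to $1$. To push the mean strictly above $1$, one must carefully account for cascading activations at depth $\geq 2$ without double-counting or reusing the same sleeping frogs. The difficulty is compounded by the requirement that the construction work uniformly in $d \geq 2$: when $d$ is large, a particular child of $v$ is visited with probability only $2/(3d)$, so any coupling that relies on several active frogs meeting at a common deeper vertex degrades as $d$ grows. The right coupling will therefore decorrelate by treating each cascade as a fresh self-similar exploration inside its own grandchild subtree, so that the generating-function recursion certifies strict supercriticality at $p = 1/3$ simultaneously for every $d \geq 2$.
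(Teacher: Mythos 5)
There is a genuine gap: the heart of your argument --- the construction of independent restricted explorations whose return count $\widetilde N$ satisfies $\E[\widetilde N]>1$ --- is never carried out, and you have correctly identified why it is hard without resolving it. At $p=1/3$ your own first-order count is exactly $1/2+1/2=1$, and the surplus you hope to extract from depth-$\ge 2$ cascades is precisely what a mean computation cannot harvest: to make successive explorations independent you must confine each cascade to a fresh grandchild subtree and never reuse a sleeping frog, but a cascade seeded by a single frog entering a depth-$2$ subtree must send its returners up through two levels (return probability $1/4$ each), and bounding its yield from below just reproduces the original problem one level down. The model itself warns that expectations do not certify recurrence: for $\FM(d,1/(d+1))$ with $d\ge 5$ the expected number of returns to the root diverges yet the model is transient, so the entire difficulty lies in the dependence structure you defer to ``the right coupling.'' Worse, at $d=2$ the value $p=1/3$ is exactly critical ($p_2=1/3$, with the matching lower bound coming from precisely the expected-return count you invoke), so there is no supercritical slack at all; a strictly supercritical embedded Galton--Watson process at $p=1/3$, $d=2$ would persist for $p$ slightly below $1/3$ under any construction whose offspring law varies continuously in $p$, contradicting transience there. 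Indeed the known recurrence of $\SFM(2,1/3)$ is proved in \cite{HJJ1} not by a supercritical branching comparison but by showing that the generating function of the number of returns is identically zero via iteration of a nonlinear operator --- a genuinely borderline argument that a mean-above-one embedding cannot reproduce.

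The paper takes a different route that sidesteps this entirely. It couples $\FM(d,p)$ to a frog process on the unrooted homogeneous tree $\mathbb T_{d+1}^{\text{homo}}$, loop-erases the frog paths to obtain a non-backtracking frog model $\text{nbFM}(d,p^\ast)$ with the \emph{modified} drift $p^\ast=p(d-1)/(d-(d+1)p)$ (Lemma \ref{lemma:matching-prob}), and dominates that from below by the self-similar model $\SFM(d,p^\ast)$; for $p=1/3$ this gives $p^\ast=(d-1)/(2d-1)$. Recurrence of $\SFM(d,(d-1)/(2d-1))$ is then proved not by a mean estimate but by deriving the fixed-point equation $g_d=\mathcal A_d g_d$ for the full generating function and establishing the operator comparison $\mathcal A_d g_d\le \mathcal A_2 g_d$, whence $g_d\le \mathcal A_2^n 1\to 0$. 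None of these three ingredients --- the drift-modifying coupling, the self-consistency equation, and the comparison with the $d=2$ operator --- appears in your sketch, and the supercritical branching structure you propose in their place is not constructed for $d\ge 3$ and cannot exist at $d=2$.
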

\begin{remark} Since $p_{2}=1/3$, this result implies
  $p_{3}\le p_{2}$, as predicted by Conjecture
  \ref{thm:monotone}. More importantly, this inequality is the first such 
  result that compares two frog models where neither 
  tree degree is an integer multiple of the other, bypassing the 
  difficulty of directly coupling two frog models. 
\end{remark}

\subsection{Proof Strategy}
It suffices to prove that $\FM(d, 1/3)$ is recurrent, 
and we will construct a frog process $\mathcal P$ on
$\mathbb T_{d}$ that is dominated by $\FM(d, 1/3)$, so that if
$\mathcal P$ is recurrent (i.e., there are infinitely many visits to the root),
then $\FM(d, 1/3)$ must also be recurrent. This strategy was used in
\cite{HJJ1} to prove $\FM(2, 1/3)$ is recurrent; there the
dominated process $\mathcal P$ was the self-similar frog model
$\SFM(2, 1/3)$. This process is a modification of the ordinary frog model in
which some active frogs are removed, resulting in a self-similar
structure;  a precise definition is given in Section
\ref{sec:proof-prop-coupling}.  In \cite{ECP19-frog}, when proving a universal upper bound for $p_{d}$, the authors made use of a recursive frog model $\text{RFM}(d, p)$, where one can directly compare $\text{RFM}(d, p)$ with $\text{RFM}(d+1, p)$ and the critical drift $p'_{d}$ needed for $\RFM(d, p)$ to be recurrent appears to be monotone in $d$. Unfortunately, since too many frogs are removed in $\RFM$, it is very difficult (i.e., a strong drift is needed) for $\RFM$ to be recurrent. To this end, the authors of \cite{ECP19-frog} were unable to obtain a sharp upper bound for $p_{d}$.

Here we will also use a comparison to the self-similar frog model
$\SFM(d, p^{\ast})$ but with a more careful choice of $p^{\ast}$. The
correct choice is indicated by the following proposition.
We
need $\SFM(d, p^{\ast})$ to be dominated by $\FM(d, p)$ so that the
recurrence of $\SFM(d, p^{\ast})$ will imply that of $\FM(d,
p)$.

\begin{prop}
\label{prop:keycompare}
For $p \le 1/2$ and $d \ge 2$, if SFM$(d, p^{\ast})$ is recurrent,
then FM$(d, p)$ is also recurrent, where 
\begin{align}
\label{eqn:prop2}
p^{\ast} = p^\ast(d,p) = \frac{p(d-1)}{d-(d+1)p}.
\end{align}
\end{prop}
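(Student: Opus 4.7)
My plan is to prove Proposition~\ref{prop:keycompare} by constructing an explicit coupling of $\FM(d,p)$ and $\SFM(d, p^\ast)$ on a common probability space, under which every root visit in the $\SFM$ realization is also a root visit in the $\FM$ realization. Since $\SFM$ is a frog model with some active frogs removed to enforce a self-similar branching structure, the task is to identify, inside a given realization of $\FM(d, p)$, a sub-collection of activations forming an honest instance of $\SFM(d, p^\ast)$.

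The value of $p^\ast$ is dictated by an excursion analysis of the biased random walk driving $\FM(d, p)$. From a non-root vertex $v$, first-step analysis shows that the walk visits $v$'s parent with probability $\pi = p/(1-p)$ and visits a fixed child of $v$ with probability $\sigma = 1/d$; the value $\sigma = 1/d$ can be verified directly by noting it solves the quadratic $pd\,\sigma^2 + (pd-p-d)\sigma + (1-p) = 0$ arising from return-probability considerations. Decomposing the walk into excursions from $v$, each excursion returns with probability $f = p\sigma + (1-p)\pi = p(d+1)/d$, and the direction of the terminal (non-returning) excursion is upward with probability
\begin{align*}
\frac{p(1-\sigma)}{1-f} \;=\; \frac{p(d-1)}{d - (d+1)p} \;=\; p^\ast,
\end{align*}
with complementary probability $(1-p^\ast)/d$ of entering each given child subtree. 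These match the single-step law of a drift-$p^\ast$ random walk on $\mathbb T_d$, and thereby explain the formula in \eqref{eqn:prop2}.

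With the excursion computation in hand, the coupling reads an $\SFM(d, p^\ast)$ realization off the $\FM(d, p)$ realization by assigning to each activated vertex $v$ the neighbor visited first on the terminal excursion of its $\FM$-walk, and then recursing into the corresponding subtree. Because the walks of different activated frogs are independent (strong Markov property at each activation time), these terminal-excursion directions are independent with the stated marginals, producing the correct self-similar joint law of $\SFM(d, p^\ast)$; and because each selected transition is realized by an actual passage in the $\FM$ walk, the $\SFM$-activated vertices form a subset of the $\FM$-activated ones, yielding the desired domination of root visits.

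The main obstacle will be reconciling this ``terminal excursion'' extraction with the precise removal rule defining $\SFM$ in Section~\ref{sec:proof-prop-coupling}: one must check that the non-terminal excursions (which activate additional frogs in $\FM$ that are absent in $\SFM$) can be safely discarded without disturbing the joint law of the surviving self-similar process, and that the recursive assembly across nested subtrees is consistent. The boundary case $p = 1/2$, where $p^\ast = 1$ and the walk is null-recurrent at $v$ so that no terminal excursion exists, will be handled by a separate direct argument (or a $p \uparrow 1/2$ limit), in which $\SFM$ trivially sends each activated frog up and the domination is immediate from $\FM$ itself being recurrent.
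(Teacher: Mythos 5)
Your excursion calculus is the right engine, and it is essentially the computation that drives the paper's Lemma~\ref{lemma:matching-prob}: extracting the last-exit (loop-erased) skeleton of each frog's walk yields a non-backtracking walk, and conditioning an excursion on non-return gives exactly $p^\ast=p(d-1)/(d-(d+1)p)$. The genuine gap is in the claim that these terminal-excursion directions have the stated marginals for the actual $\FM(d,p)$ walks. Your values $\pi=p/(1-p)$ and, especially, $\sigma=1/d$ are those of a \emph{spatially homogeneous} biased walk: the quadratic $pd\sigma^2+(pd-p-d)\sigma+(1-p)=0$ is derived assuming that the probability of returning to $v$ from $v$'s parent is again $\sigma$. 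On the rooted tree this fails because in $\FM(d,p)$ the root is reflecting. Writing $\sigma_j$ for the probability that the walk started at a level-$j$ vertex ever visits a fixed child, one finds $\sigma_0=\frac{1-p}{\,d-(2d-1)p\,}>\frac1d$, and the recursion $\sigma_j=\frac{(1-p)/d}{\,1-p(d-1)/d-p\sigma_{j-1}\,}$ then forces $\sigma_j>1/d$ at \emph{every} level. The terminal excursion of a frog at level $k$ is therefore upward with probability $\frac{p(1-\sigma_{k-1})}{1-p-p\sigma_{k-1}}$, which is strictly \emph{less} than $p^\ast$ when $p<1/2$ (the derivative of this expression in $\sigma$ has the sign of $2p-1$). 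So the extracted process is not an $\SFM(d,p^\ast)$, and since it is not clear that a self-similar frog process with smaller upward bias near the root dominates $\SFM(d,p^\ast)$ in root visits, you cannot simply replace ``equal in law'' by ``dominates.'' (At $p=1/2$ the reflected walk has $\sigma_j\equiv1$ and is genuinely recurrent, so the extraction breaks down entirely there, as you partly sensed.)

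This boundary effect is exactly what the paper's construction is built to remove: it first embeds $\mathbb T_d$ into an \emph{unrooted} homogeneous $(d+1)$-regular tree and lets every frog except the root frog walk on the whole homogeneous tree (process $\mathcal P_1$), where transience and translation invariance make $\pi=p/(1-p)$ and $\sigma=1/d$ exact at every vertex, so that loop-erasure followed by termination at the root produces an honest $\mathrm{nbFM}(d,p^\ast)$ (Lemma~\ref{lemma:matching-prob}). The price is one extra, easy, domination step: trimming those paths back to $\mathbb T_d$ only amounts to stopping $\FM(d,p)$ paths early at the root, giving $V_{\mathcal P_3}\preceq V_{\mathcal P_2}\preceq V_{\FM(d,p)}$. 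If you add this device (or an equivalent artificial extension of each walk past the root), your plan goes through; it also disposes of $p=1/2$ without a separate limit argument, since the unrestricted homogeneous-tree walk still fails to return with probability $1-\frac{d+1}{2d}>0$. You would then still need the final, easy comparison $V_{\SFM(d,p^\ast)}\preceq V_{\mathrm{nbFM}(d,p^\ast)}$, i.e., that the lethal-boundary rule of $\SFM$ only removes frogs.
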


This will be proved by coupling arguments in Section
\ref{sec:proof-prop-coupling}. Although Proposition
\ref{prop:keycompare} holds generally for any $p\le 1/2$ and $d\ge 2$,
we will only need the result for $p=1/3$.

Once Proposition~\ref{prop:keycompare} has been proved,
we will then need to prove the
recurrence of $\SFM(d, p^{\ast})$ for 
$p^{\ast}=p^{\ast}(d, p)$ with $p=1/3$, that is, 
\[
p^{\ast}(d, 1/3) = \frac{\frac{1}{3}(d-1)}{d-(d+1)/3} = \frac{d-1}{2d-1}.
\]
This will be accomplished by the following proposition.
\begin{prop}
\label{prop:key}
The self-similar frog model SFM$(d, (d-1)/(2d-1))$ is recurrent.
\end{prop}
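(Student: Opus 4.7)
The plan is to exploit the self-similar recursion of $\SFM(d,p^*)$ at $p^* = (d-1)/(2d-1)$ to reduce recurrence to a single generating-function analysis. Let $N$ denote the number of frogs which, in $\SFM(d, p^*)$ started from a freshly activated non-root vertex $v$, ever cross the edge $v \to \mathrm{parent}(v)$ upward. By the defining self-similar construction, $N$ has the same distribution at every non-root vertex, and the contributions of the $d$ child subtrees of $v$ are i.i.d.\ copies of $N$. Setting $\phi(s) = \E[s^N]$, recurrence of $\SFM(d, p^*)$ follows once we show $\P(N = \infty) > 0$ (equivalently $\phi(1^-) < 1$): iterating over the $d$ subtrees of the actual root then produces infinitely many root visits, and the $0$--$1$ law for $\SFM$ recurrence promotes the positive probability to almost-sure recurrence.

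I would first derive an explicit functional equation $\phi(s) = F_d(\phi(s))$ by tracking the single active frog that arrives at $v$. Its walk is biased with up-probability $p^*$ and per-child down-probability $(1-p^*)/d$. Conditioning on the vector $(M_1,\ldots,M_d)$ counting visits to each child of $v$ before the frog's first upward exit---a vector whose joint generating function is computable explicitly from the one-step transitions of the biased walk---and substituting an independent copy of $N$ for the wave of frogs returned by each activated child gives a closed-form $F_d$. The $\SFM$ removal rules guarantee that each returning frog then walks independently from $v$, so the overall upward-crossing count from $v$ is obtained by iterating this contribution across the successive returning waves.

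The final step is to verify that at $p^* = (d-1)/(2d-1)$ the fixed-point equation $\phi = F_d(\phi)$ forces $\phi(1^-) < 1$, equivalently that the implicit branching process encoded by $F_d$ is supercritical (or has infinite offspring mean) at this drift. A natural approach is to compute the expected upward-crossing count per activated child via Wald's identity on the biased walk and show it exceeds $1$ at exactly $p^* = (d-1)/(2d-1)$; if the mean turns out to be infinite, a tail-bound argument should give the same conclusion. The main obstacle is two-fold: writing down $F_d$ correctly requires delicate bookkeeping of the interleaving between the original frog's excursions to different children and the returning waves, and then one must prove that the specific value $p^* = (d-1)/(2d-1)$ saturates the $\SFM$ recurrence threshold for every $d \ge 2$. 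I anticipate that, once the recursion is in hand, the criticality calculation will distill into a clean algebraic identity involving the geometric distribution governing the biased walk, mirroring the algebra that produced $p^*(d, 1/3) = (d-1)/(2d-1)$ in Proposition~\ref{prop:keycompare}.
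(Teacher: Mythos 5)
Your first stage --- using the self-similar structure to write a fixed-point equation for the generating function of the number of upward crossings --- is essentially the paper's Section 3: your $\phi$ is $g_{d,p}$ and your $F_d$ is the operator $\mathcal{A}_{d,p}$, built from the polynomials $P_k,Q_k$ via an inclusion--exclusion over which child subtrees are activated and by whom. The ``delicate bookkeeping'' you flag is real but doable; note, however, that the $d$ subtree contributions are not i.i.d.\ unconditionally, since whether $\mathbb{T}_d(o_j)$ is ever activated depends on the frogs in the other subtrees --- this is precisely why the paper needs the events $A_{d,k}$, $B_d(o_j;J)$ and the recursively defined polynomials.

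The genuine gap is in your final step. Identifying which fixed point of $\phi=F_d(\phi)$ the true generating function is cannot be done by a first-moment or criticality computation. At $p^*=(d-1)/(2d-1)$ a frog arriving at $\rootvertex'$ continues to $\rootvertex$ with probability exactly $1/2$, so the naive per-level offspring mean is $d/2\ge 1$; but such expectation counts are exactly what yields only the non-sharp lower bound $p_d\ge 1/(d+1)$ in the introduction, and they would equally ``predict'' recurrence of $\FM_d$ for $d\ge 5$, which is false. The system of activated subtrees is not a Galton--Watson process (each vertex has only $d$ subtrees, and activation is strongly dependent across them), so supercriticality of an ``implicit branching process,'' Wald's identity, or even an infinite offspring mean does not imply $\P(N=\infty)>0$. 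What is actually needed is the iteration $g_d=\mathcal{A}_d^n g_d\le \mathcal{A}_d^n 1\to 0$, i.e.\ a vanishing property of the iterated operator; the paper obtains this not by analyzing $\mathcal{A}_d$ directly but by proving the domination $\mathcal{A}_d g_d\le \mathcal{A}_2 g_d$ at $p^*=(d-1)/(2d-1)$ (Proposition \ref{prop:Ad-A2}) and then invoking the monotonicity of $\mathcal{A}_2$ and the known fact $\mathcal{A}_2^n 1\to 0$ from the $\SFM(2,1/3)$ analysis of \cite{HJJ1}. Your proposal contains no substitute for this step. (A smaller issue: you invoke a $0$--$1$ law for $\SFM$ recurrence, which is not established; the paper instead shows $g_d\equiv 0$ directly, and in any case positive probability of infinitely many root visits would have to be transferred to $\FM$, where the $0$--$1$ law is known.)
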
  

The idea is to compare $\SFM(d, (d-1)/(2d-1))$ with $\SFM(2, 1/3)$,
which is  known to be recurrent \cite{HJJ1}.  To do this, let
$V^{\ast}_{d, p}$ be the total number of visits to the root in
$\SFM(d, p)$, and  consider the probability generating function
$g_{d, p}(x) := \E(x^{V^{\ast}_{d, p}})$ for $x\in[0, 1)$. We will use
the self-similar structure in $\SFM(d, p)$ to derive the following
self-consistency equation for $g_{d, p}(x)$
\begin{align}
\label{eqn:RDE}
g_{d, p}(x) = \mathcal A_{d, p}g_{d, p}(x),
\end{align}
where $\mathcal A_{d,p}$ is an operator on the set
$\mathcal I=\{f: [0,1) \to [0, 1], \text{nondecreasing}\}$ of
functions that will be defined in
Section~\ref{sec: proofofkey}. 
This is the most technical part of the paper: a recursive algorithm to
prove \eqref{eqn:RDE} for any $d$ is proposed in Section \ref{sec: proofofkey}.

When $d=2$ and $p=1/3$, it was shown in \cite{HJJ1} that 
\[
g_{2, 1/3}(x) = \mathcal A^{n}_{2, 1/3} g_{2, 1/3}(x) \le  \mathcal A^{n}_{2, 1/3} 1 \to 0,
\]
implying that $g_{2, 1/3}(x)\equiv 1$ and $V^{\ast}_{2, 1/3}=\infty$
almost surely. The recurrence of $\SFM(2, 1/3)$ thus follows. Once we
establish \eqref{eqn:RDE}, we can compare \eqref{eqn:RDE} with the
$d=2$ case thanks to the recursive algorithm. It turns out that when
choosing $p^{\ast} = \frac{d-1}{2d-1}$, all operators
$\mathcal A_{d, \frac{d-1}{2d-1}}$ are dominated by
$\mathcal A_{2, 1/3}$, yielding $g_{d, \frac{d-1}{2d-1}}(x)\equiv 1$
(see Section \ref{sec:propertyAd}), thus finishing the proof of
Theorem \ref{thm:upbd}.

\section{Proof of Proposition \ref{prop:keycompare}: the coupling\label{sec:proof-prop-coupling}}
In this section, we construct couplings among three types of frog
processes on rooted $d$-ary tree $\mathbb T_d$, namely, the classic
frog model $\FM(d,p)$, the nonbacktracking frog model
$\text{nbFM}(d, p)$, and the self-similar frog model
$\SFM(d,p)$. There are other auxiliary frog processes involved in the
couplings, which we call $\mathcal P_1, \mathcal P_2$ and so
on. Proposition \ref{prop:keycompare} follows from these couplings. We
first give precise descriptions for the nonbacktracking frog model and
the self-similar frog model.

The {\bf non-backtracking frog model} {nbFM$(d, p)$} on the rooted
$d$-ary tree $\mathbb T_{d}$ with drift parameter $p \in [0, 1]$ evolves
according to the same rules as $\FM(d,p)$, with two exceptions. 
First, the paths of active frogs are \emph{non-backtracking}, that is, an
active frog never returns to a site it has previously visited. Second,
active frogs are killed upon visits to the root vertex. Thus, only one
child vertex of the root, which we will henceforth denote by
$\rootvertex'$, can ever be visited in nbFM$(d,p)$: this is the vertex
to which the initially active frog at $\emptyset$ jumps on its first
step. Any other frog, 
upon awakening, will begin its journey with either an upward move to 
the parent vertex with probability $p$ or otherwise a downward move
to a uniformly-chosen child vertex.  In accordance with the 
non-backtracking rule, all following steps must satisfy 
(i) if the last step is upward, the next step will be upward with probability 
$p/(p+(1-p)(d-1)/d)$ or otherwise downward to any one of the child vertices equally likely, 
unless it is at the root $\emptyset$, in which case it is killed; and (ii) if the last step
is downward, the next step will always be downward, equally likely to 
any child vertex.

The {\bf self-similar frog model} $\SFM(d, p)$ on the rooted $d$-ary
tree $\mathbb T_{d}$ with drift parameter $p\in [0, 1]$ can be
constructed by modifying the frog paths in nbFM$(d, p)$ as follows.  
Let $o_{1}, \cdots  ,o_{d}$ be the
child vertices of $\rootvertex'$, the vertex chosen by the initially
active frog at the root on its first jump. Whenever one of these vertices $o_{i}$
is visited for a first time (from $\rootvertex'$), it becomes lethal to
frogs that attempt to jump to it from $\emptyset'$ forever afterward; any
such attempt results in the death of the frog attempting the
jump. Thus, each of the subtrees $\mathbb{T}_{d}(o_{i})$ is entered
from the outside at most once, and conditional on the event that there
is such an entry, the restriction of $\SFM(d,p)$ to this subtree is a
(time-shifted) replica of $\SFM(d,p)$ in the tree
$\mathbb{T}_{d}(\rootvertex')$. This ``self-similarity'' will allow us
to write a ``recursive distributional equation'' or 
``self-consistency equation'' for the generating function of the total
number of  visits to the root: see \cite{HJJ1} and Section \ref{sec: proofofkey}
below.

We next consider a natural embedding of $\mathbb T_d$ in an unrooted
$(d+1)$-ary homogeneous tree $\mathbb T_{d+1}^\text{homo}$ so that
$\mathbb T_{d}$ is isomorphic to a subtree
$\mathbb T'_{d} \subset \mathbb T_{d+1}^{\text{homo}}$. Here, by an
unrooted $(d+1)$-ary homogeneous tree, we mean an infinite tree in
which {\em every} vertex is connected to exactly $(d+1)$ other
vertices (see Figure 1). In this embedding, we associate the root
vertex $\rootvertex$ of $\mathbb T_d$ with with an arbitrary vertex in
$\mathbb T_{d+1}^{\text{Homo}}$ and call it $ \widetilde
\rootvertex$. Fixing an embedding, we assign a level to each vertex in
$\mathbb T_{d+1}^{\text{homo}}$, starting from
$\widetilde \rootvertex \in \mathbb T_{d+1}^{\text{homo}}$ -- the
levels correspond to the ``generations'' in $\mathbb T_{d}$. First,
the level of $\widetilde \rootvertex$ is set to 0 and the level of the
$d$ neighbors of $\widetilde \rootvertex$ in
$\mathbb T_{d+1}^{\text{homo}}$ corresponding to the $d$ child
vertices of $\rootvertex \in \mathbb T_d$ is set to 1. We proceed
until finishing assigning levels for all vertices in the subtree
$\mathbb T'_d$. For other vertices in
$\mathbb T_{d+1}^{\text{homo}} \setminus \mathbb T'_d$, we assign them
levels so that each vertex at level $k$ is connected to $d$ ``child''
vertices in level $(k+1)$ and one ``parent'' vertex in level
$(k-1)$. For example, for $\widetilde \rootvertex$, since it is
already connected to $d$ vertices in level $1$ during the first stage,
the only vertex that has not been assigned a level is then marked a
level $-1$. Figure 1 illustrates how levels $-1$ to 3 would look like
in $\mathbb T_3^{\text{homo}}$.

\begin{figure}[ht]
    \centering
    \includegraphics[width=\textwidth]{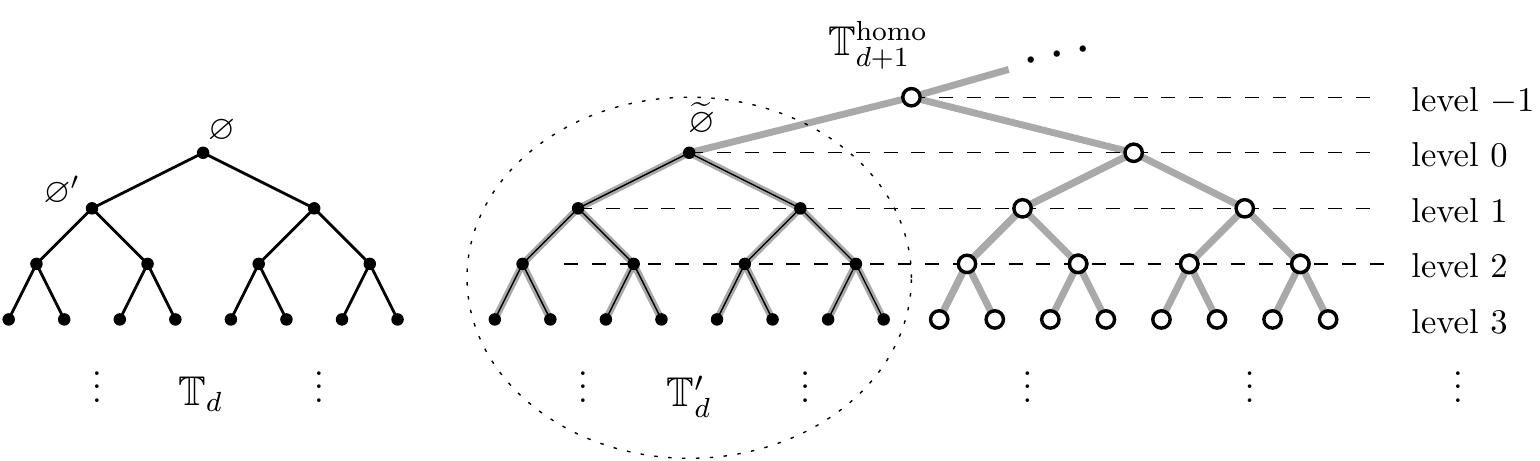}
    \caption{}
    \label{fig:my_label}
\end{figure}

Having determined the subtree
$\mathbb T_d'\subset \mathbb T^{\text{ homo}}_{d+1}$ and the levels,
we run a frog process on $\mathbb T_{d+1}^{\text{homo}}$, and call it
$\mathcal P_1 = \mathcal P_1 (\mathbb T_{d+1}^{\text{homo}}, p)$. At
time $t=0$, a sleeping frog is placed at every vertex of the subtree
$\mathbb T'_d$ (black vertices in Figure 1), and at time $t=1$ the
frog at $\widetilde \rootvertex$ (call it
$f_{\widetilde \rootvertex}$) wakes up and moves to an uniformly
chosen child vertex in level $1$. After the first step, it performs a
$p$-biased random walk on $\mathbb T_{d}'$ {\bf with a reflecting
  boundary at $\widetilde \rootvertex$}, that is, with probability $p$
it moves to the vertex at one level above, with probability $(1-p)$ it
moves to a uniformly-chosen vertex in the level below, and its path is
always reflected at $\widetilde \rootvertex$. As before, whenever an
active frog visits a vertex with a sleeping frog, it wakes up that
frog. Upon waking up, all frogs (except $f_{\widetilde \rootvertex}$)
perform independent $p$-biased random walks on the {\bf entire}
homogeneous tree $\mathbb T_{d+1}^{\text{homo}}$ starting from the
vertices where they originally
sleep. 

From $\mathcal P_1$, we may construct a one-per-site frog process
$\mathcal P_2 = \mathcal P_2(\mathbb T_d, p)$ on $\mathbb T_d$. Since
there is a natural graph isomorphism between $\mathbb T_d$ and
$\mathbb T'_d\subset \mathbb T_{d+1}^{\text{homo}}$, any frog random
walk path $\gamma'$ in $\mathcal P_1$, trimming off the segments spent
outside the subgraph $\mathbb T'_d$, can be translated under the
isomorphism to a $\gamma$ on $\mathbb T_d$ (which might be of finite
length). To this end, we can assign to each frog $f$ in $\mathcal P_2$
the path trimmed and translated from that of the the frog $f'$ in
$\mathcal P_1$ originally placed on the corresponding vertex of
$\mathbb T'_d$. It is not hard to see that $\mathcal P_2$ is dominated
by the frog model $\FM(d, p)$ in the sense that at any time $t$, the
number of active frogs and the total number of visits to the root
vertex in $\mathcal P_2$ in the long run are no more than the
corresponding quantities in $\FM(d, p)$. This is because frog paths in
$\mathcal P_2$ can be considered as those in $\FM(d, p)$ randomly
stopped at the root vertex $\rootvertex$, and such modifications can
only delay waking up frogs and/or reduce the number of visits to the
root vertex $\rootvertex$. To this end, if we let $V_\mathcal P$ be
the total number of visits to the root vertex in a frog process
$\mathcal P$ on $\mathbb T_{d}$, then
$V_{\mathcal P_2(\mathbb T_d, p)} \preceq V_{\FM(d, p)}$.

Now let's consider another one-per-site frog process
$\mathcal P_{3}=\mathcal P_3(\mathbb T_d, p)$ on $\mathbb T_d$ also
constructed from $\mathcal P_1$: for any frog $f^\prime$ in process
$\mathcal P_1$ {\em not} starting from $\widetilde \rootvertex$, if
its $p$-biased random walk path is $\gamma'_{f^\prime}$, we remove all
loops in $\gamma'_{f^\prime}$ first and then terminate the loopless
path at the first visit to $\widetilde \rootvertex$. Under the graph
isomorphism, the resulting path is mapped to a path $\gamma$ on
$\mathbb T_d$ (possibly of finite length) and assigned to the
corresponding frog $f$ in the process $\mathcal P_{3}$; (ii) for the
frog starting from $\widetilde \rootvertex$, we simply remove all
loops in its $p$-biased random walk path, which is then assigned to
the corresponding frog in $\mathcal P_3$ starting from
$\rootvertex \in \mathbb T_d$. We have that
$V_{\mathcal P_{3}(\mathbb T_d, p)} \preceq V_{\mathcal P_2(\mathbb
  T_d, p)}$. To see this, if we drive both $\mathcal P_{2}$ and
$\mathcal P_3$ by the same realization of
$\mathcal P_{1}(\mathbb T_{d}^{\text{Homo}}, p)$, then the paths of
frogs in $\mathcal P_3$ can always be obtained by further trimming the
paths of corresponding frogs in $\mathcal P_2$. Therefore, under this
coupling, if a frog $f_{v}$ sleeping at some vertex
$v\in \mathbb T_{d}$ is ever activated in $\mathcal P_3$, the frog in
$\mathcal P_2$ sleeping at the same vertex of $\mathbb T_{d}$ must
also be activated. Furthermore, since each activated frog in
$\mathcal P_3$ will visit no more sites than its counterpart in
$\mathcal P_2$, the desired stochastic dominance follows. We further
observe that

\begin{lemma}
\label{lemma:matching-prob}
The frog process $\mathcal P_{3}=\mathcal P_{3}(\mathbb T_{d}, p)$ is
a nbFM$(d, p^\ast)$ with $p^\ast$ chosen as in \eqref{eqn:prop2}.
\end{lemma}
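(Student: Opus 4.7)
My plan is to verify step-by-step that the distribution of the non-backtracking paths assigned to frogs in $\mathcal P_3$ matches the defining rules of $\text{nbFM}(d,p^*)$. Since $\mathbb T_{d+1}^{\text{homo}}$ is a tree, loop-erasure automatically produces self-avoiding, non-backtracking paths, and any such path terminated at $\widetilde\rootvertex$ stays within $\mathbb T_d'$. The proof thus reduces to checking (i) the first-step distribution of the loop-erased walk from any non-root vertex, (ii) the transition probabilities at each subsequent oriented edge, and (iii) the dynamics of the initially-active frog at $\widetilde\rootvertex$.

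For (i), I would compute two hitting probabilities of the underlying $p$-biased walk on $\mathbb T_{d+1}^{\text{homo}}$ by first-step analysis: $\alpha := P_{\text{child}}(\text{hit parent})$ satisfies $\alpha = p + (1-p)\alpha^2$, giving $\alpha = p/(1-p)$; and $\beta := P_{\text{parent}}(\text{hit a specific child})$ satisfies a quadratic whose discriminant factors as the perfect square $(d - (d+1)p)^2$, yielding the relevant root $\beta = 1/d$. Then the return probability at any vertex $v$ at level $\ge 1$ equals $P_v(\text{return}) = p\beta + (1-p)\alpha = p(d+1)/d$, and the standard last-exit identity
\[
P(\text{first loop-erased step to } u) = \frac{P(v\to u)\bigl(1 - P_u(\text{visit } v)\bigr)}{1 - P_v(\text{return})}
\]
yields first-step-up probability $p(1-\beta)/(1-p(d+1)/d) = p(d-1)/(d-(d+1)p) = p^*$ and first-step-down-to-each-child probability $(1-p^*)/d$, matching exactly the initial-step rule of $\text{nbFM}(d,p^*)$ at a non-root vertex.

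For (ii), by the Markov property of the underlying walk and the vertex-transitivity of $\mathbb T_{d+1}^{\text{homo}}$, the loop-erased walk is itself a Markov chain on oriented edges with only two symmetry classes: a ``down'' class (previous vertex is the parent) and an ``up'' class (previous vertex is a child). A down oriented edge at $w$ forces the next step to be one of the $d$ children, and by symmetry it is uniform, matching $\text{nbFM}$ rule (ii). For an up oriented edge at $w$ arriving from a child $w'$, the continuation is distributed as the loop-erased walk from $w$ under the walk conditioned never to return to $w'$; using an $h$-transform with $h(\cdot) = P_\cdot(\text{never visit } w')$ and one more round of first-step analysis, the ``next step up'' probability works out to $p/(1-p)$, and a short algebraic check confirms this equals $p^*/(p^* + (1-p^*)(d-1)/d)$, matching $\text{nbFM}$ rule (i). For (iii), the initially-active frog's $\mathcal P_1$-path is a reflected $p$-biased walk whose first step is a uniform child of $\widetilde\rootvertex$ by construction, after which non-backtracking forces the loop-erasure to proceed monotonically downward, matching $\text{nbFM}(d,p^*)$'s initial-frog rule. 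Independence of the frog paths in $\mathcal P_3$ is inherited directly from $\mathcal P_1$. The main technical hurdle is the up-after-up computation in (ii): the algebra for the $h$-transformed return probabilities is mildly involved, but by the tree's homogeneity it collapses to a single identity in $\alpha$ and $\beta$.
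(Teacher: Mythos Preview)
Your proof is correct, and it takes a genuinely different route from the paper's. The paper argues by classifying each frog's loop-erased path by its full \emph{shape}: the number $k_{1}$ of initial upward steps (cases $k_{1}=0$, $1\le k_{1}\le |v|-1$, and $k_{1}=|v|$). For each case it computes the probability directly as a geometric-type sum over the maximal upward excursion length of the underlying $p$-biased walk, using only the single quantity $\rho=p/(1-p)$, and then matches these pattern probabilities to the corresponding pattern probabilities in $\text{nbFM}(d,p^{\ast})$. Your approach instead establishes the oriented-edge Markov structure of the loop-erased walk and matches the \emph{transition kernel} to that of $\text{nbFM}(d,p^{\ast})$: the first-step law via the last-exit identity with $\alpha=p/(1-p)$ and $\beta=1/d$, and the up-after-up probability via a Doob $h$-transform. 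One can check that your up-after-up probability $p/(1-p)$ is exactly the ratio of the paper's pattern probabilities $P(k_{1}\ge 2)/P(k_{1}\ge 1)$, so the two computations are consistent; your method just packages the same information differently. The paper's argument is more elementary (no appeal to loop-erased random walk theory, Green's functions, or $h$-transforms --- only geometric sums), while yours is more structural and makes the Markov property of the resulting paths explicit rather than leaving it implicit in the pattern-matching. Both are of comparable length once the $h$-transform algebra in your step~(ii) is written out.
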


With Lemma \ref{lemma:matching-prob}, we establish the hierarchy of
stochastic dominance, namely
\[
V_{\text{SFM}(d, p^\ast)} \preceq V_{\text{nbFM}(d, p^\ast)}=V_{\mathcal P_{3}(\mathbb T_{d}, p)} \preceq V_{\mathcal P_2(\mathbb T_d, p)} \preceq V_{\FM(d, p)},
\]
which implies Proposition \ref{prop:keycompare}.

\begin{proof}[Proof of Lemma \ref{lemma:matching-prob}] In
  $\mathcal P_1$, since $p<1/2$, every active frog will eventually
  drift away to level $\infty$. In particular, after removing all
  loops, the resulting path assigned to the frog originally placed at
  $\rootvertex$ in $\mathcal P_{3}$ is an infinite ray in
  $\mathbb T_d$ chosen uniformly from all possible such rays, the same
  law as that of the frog path in nbFM$(d, p^\ast)$ started at the
  root vertex, because in nbFM$(d, p^\ast)$, the frog started at the
  root will move to a uniformly-chosen child vertex at every step.

  Now we consider frogs in $\mathcal P_3$ that are not placed at the
  root vertex.  Note that for any frog path $\gamma'$ in
  $\mathcal P_1$ started at some vertex $v' \in \mathbb T'_d$ and
  $v^\prime \ne \widetilde \rootvertex$, removing all its loops and
  then terminating it at the first visit to $\widetilde \rootvertex$
  would then always map to a non-backtracking path $\gamma$ on
  $\mathbb T_d$ that (i) starts at the corresponding non-root vertex
  $v\in \mathbb T_{d}$, (ii) first leads up toward the root for $k_1$
  steps (for some $k_1 \le |v|$, where $|v|$ denotes the graph
  distance between $v$ and $\rootvertex$), and then (iii) drifts to
  infinitely far away from the root (when $k_1 < |v|$) or stops at
  $\rootvertex$ (when $k_{1}=|v|$). By symmetry, for any such path,
  the last segment leading directly to infinity from some vertex is an
  infinite ray chosen uniformly from all rays from the aforementioned
  vertex to infinity for both $\mathcal P_{3}$ and
  nbFM$(d, p^{\ast})$. It suffices to show that for all possible
  values of $k_1$, the probability that a frog in $\mathcal P_{3}$
  gets assigned a path of such a pattern is the same as the
  probability that such a pattern is observed in nbFM$(d, p^\ast)$ if
  we choose $p^\ast$ as in \eqref{eqn:prop2}. There are three cases.
\begin{enumerate}[(a)]
\item $k_1=0$. In nbFM$(d, p^\ast)$, a frog at a non-root vertex
  immediately moves away from the root upon waking up with probability
  $(1-p^\ast)$. In $\mathcal P_3$, such a non-backtracking path can be
  obtained by the loop-erasural procedure from a frog path in
  $\mathcal P_1$ with probability
\[
\sum_{l=0}^\infty \rho^l (1-\rho) \left(\frac{1}{d}\right)^l = \frac{1-\rho}{1-\rho/d} = 1 - \frac{\rho(1-1/d)}{1-\rho/d} = 1-p^\ast,
\]
where $\rho := \frac{p}{1-p}$ is the probability that a random walk on
$\mathbb Z$ starting from 0 with step distribution
$p\delta_{-1} + (1-p)\delta_{+1}$ never visits location $-1$. In the
summation, the variable $l$ tracks the furthest distance that a frog
in $\mathcal P_1$ has ever reached above its sleeping level. After
that, it must trace backward along the same route, return to the
vertex it starts from and then drift to level $\infty$, because only
in this way, this loop will be removed and we are left with a
loop-erased path $\gamma$ with the desired pattern.
\item $k_1=1,\ldots, |v|-1$ when $|v|\ge 2$. In nbFM$(d, p^\ast)$, the probability that a frog at a non-root vertex $v$ first moves $k_1 < |v|$ steps toward the root and then moves away to infinity is  
$$
p^\ast \, \left(\frac{p^\ast}{\frac{d-1}{d}(1-p^\ast)+p^\ast}\right)^{k_1-1} \left(\frac{\frac{d-1}{d}(1-p^\ast)}{\frac{d-1}{d}(1-p^\ast)+p^\ast}\right).
$$
In the frog process $\mathcal P_3$, such a non-backtracking path can be obtained from a frog path in $\mathcal P_1$ with probability 
$$
\sum_{l=0}^\infty \rho^{k_1+l}(1-\rho) \left(\frac{1}{d}\right)^l \left(\frac{d-1}{d}\right) =  \frac{\rho^{k_1} (1-\rho)(1-1/d)}{1-\rho/d}.
$$
Similar to the first case, $k_1+l$ denotes the furthest distance that
a frog in $\mathcal P_1$ has ever reached above its sleeping level. If
after removing all loops, there are still $k_1$ upward steps left in
the resulting non-backtracking path, then the frog in $\mathcal P_{1}$
must have travelled exactly $l$ steps along the same route that had
led it $(k_{1}+l)$ levels up.  By choosing $p^\ast$ as in
\eqref{eqn:prop2}, the above two expressions are equal.
\item $k_1=|v|$. In nbFM$(d, p^\ast)$, the probability that a frog at a non-root vertex moves straight to the root vertex upon waking up is 
\[
p^\ast \, \left(\frac{p^\ast}{\frac{d-1}{d}(1-p^\ast)+p^\ast}\right)^{|v|-1},
\]
whereas a path of the same pattern can be obtained by trimming a frog
path in $\mathcal P_1$ with probability
\[
\sum_{l=0}^\infty \rho^{|v|+l} (1-\rho) \sum_{m=0}^{l} \left(\frac{1}{d}\right)^m\left(\frac{d-1}{d}\right). 
\]
In the above expression, $l$ marks the number of levels above
$\widetilde \rootvertex$ that the frog in $\mathcal P_{1}$ has ever
reached and $m$ denotes the number of steps that the frog has traced
back. It is again easy to verify that these two expressions are equal
when $p^\ast$ is chosen as in \eqref{eqn:prop2}.
\end{enumerate}
The proof of Lemma \ref{lemma:matching-prob} is complete.
\end{proof}

\section{Proof of Proposition \ref{prop:key}: the self-consistency equation }
\label{sec: proofofkey}
In this section and the next, we will prove that the self-similar frog
model $\SFM(d, \frac{d-1}{2d-1})$ is recurrent. The proof has two
steps. The first step, to which this section is devoted, will be
to establish a fixed-point equation
\begin{equation}
\label{eqn:self-consist}
g_{d, p}(x) = \mathcal A_{d, p} g_{d, p}(x)
\end{equation}
for the probability
generating function $g_{d,p}(x) := \E(x^{V^\ast_{d,p}})$ of the number
$V^\ast_{d, p}$ of visits to the root vertex $\rootvertex$. The
second, which will be carried out in Section~\ref{sec:propertyAd},
will be to use the fixed-point equation to show that $g_{d,p}(x)
=0$ for all $x \in (0,1)$; it will then follow that $V^\ast_{d,
  \frac{d-1}{2d-1}} = \infty$ almost surely, proving that $\SFM(d,
\frac{d-1}{2d-1})$ is recurrent. This strategy was  used
in \cite{HJJ1} to show that $\SFM(2, 1/3)$ is recurrent and in
\cite{josh_32} to show that the frog model on a $(2,3)$-alternating
tree is recurrent.

The fixed-point equation \eqref{eqn:self-consist} involves a
nonlinear operator $\mathcal{A}_{d,p}$ whose domain is the function space
\begin{displaymath}
  \mathcal I := \{f:[0,1)\to [0,1], \text{ nondecreasing}\},
\end{displaymath}
which contains the generating function $g_{d,p}(x)$ as an element.
This operator is a polynomial in composition operators
$Z_{k}:\mathcal{I}\rightarrow \mathcal{I}$
defined  for any $p \in (0,1)$ and
$k=1,\cdots, d  $ by
\begin{align}
\label{eqn:def-zk}
Z_{k}(h)&:=Z_{k,d,p}(h)=h\circ c_{d, p}^{(k-1)}, \\
\text{where }\ \ 
\label{eqn:constC}
c_{d,p}^{(k-1)}(x)&:=\frac{px + (1-p)(k-1)/d}{p+(1-p)(d-1)/d} \quad \text{for } x\in [0, 1]. 
\end{align}

Observe that the functions $c_{d,p}^{(k)}$ where $k=0, 1, \ldots, (d-1)$, are linear functions that map
the unit interval monotonically onto subintervals of $[0,1]$. The operator $\mathcal{A}_{d,p}$ is defined   as follows: for any $h \in \mathcal{I}$,
\begin{align}
\notag
  \mathcal{A}_{d,p} h (x)&= \left(px + \frac{1-p}{d}\right) \sum_{k=1}^{d}\binom{d-1}{k-1}P_{k}(Z_{1}(h)(x),
  \ldots, Z_{k}(h)(x) ) \\
    \label{eq:Adp-def}
  & \qquad + \frac{(d-1)(1-p)}{d} \sum_{k=2}^{d}\binom{d-2}{k-2}Q_{k}(Z_{1}(h)(x),\ldots,
  Z_{k}(h)(x) ),
\end{align}
where $P_{k}: \mathbb R^{d} \to \mathbb R$ and  $Q_{k}:\mathbb R^{k}\to
\mathbb R$ are polynomials in $k$ variables defined  recursively  by
\begin{align}
\label{eqn:p-recursion}
    P_{k+1} (z_1, \ldots, z_{k+1}) &=z_{k+1}^{k+1} - \sum_{l=1}^{k}
                                     \binom{k}{l-1}z_{k+1}^{k+1-l}P_{l}(z_1,\ldots,
                                     z_l);\\ 
 \label{eqn:q-recursion}
    Q_{k+1} (z_1, \ldots, z_{k+1}) &=z_{k+1}^{k+1} - \sum_{l=2}^{k}
                                     \binom{k-1}{l-2}z_{k+1}^{k+1-l}Q_{l}(z_1,\ldots,
                                     z_k),  
\end{align}
starting with $P_{1}(z_{1})=z_{1}$ and $Q_{2}(z_{1}, z_{2}) = z_{2}^{2}$.

\begin{prop} \label{prop:recursive} For any $p \le 1/2$ and $d\ge 2$,
    the generating function $g_{d,p}(x) := \E(x^{V^\ast_{d,p}})$ 
  for the total number of visits to the root in $\SFM(d, p)$ satisfies
  equation \eqref{eqn:self-consist}.
\end{prop}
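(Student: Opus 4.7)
The plan is to exploit the self-similar structure of $\SFM(d,p)$: conditional on subtree $\mathbb T_d(o_j)$ being entered, the dynamics inside it are a replica of $\SFM(d,p)$, so the number of frogs that emerge at $o_j$ from within is an i.i.d.\ copy of $V^\ast_{d,p}$. I would build $g_{d,p}(x)$ by conditioning on (i) the decision of the originally sleeping $\emptyset'$-frog upon being awakened by the initial frog, and (ii) the decisions of the (random) emerging frogs from each activated $\mathbb T_d(o_j)$. Each such emerging frog arrives at $o_j$ with a most-recent upward step, and then performs a single non-backtracking jump at $\emptyset'$ whose conditional probabilities depend on how many sibling subtrees of $\emptyset'$ have been activated at the instant of its decision.

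The function $c^{(k-1)}_{d,p}(x) = (px + (1-p)(k-1)/d)/(p + (1-p)(d-1)/d)$ encodes the generating function for the ``terminal'' outcome of one emerging frog's decision when $k-1$ non-backtracked siblings of $\emptyset'$ are already activated: it assigns $x$ to an upward move to $\emptyset$ (with nbFM weight $p$), $1$ to a death jump onto an activated sibling (with nbFM weight $(1-p)(k-1)/d$), and $0$ to a jump onto a non-activated sibling, whose activation effect is bookkept separately. The composition $Z_k(g_{d,p})(x) = g_{d,p}(c^{(k-1)}_{d,p}(x))$ thus aggregates the total contribution of an entire batch of $V^\ast_{d,p}$ i.i.d.\ emerging frogs all facing $k-1$ lethal siblings. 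The two sums in $\mathcal A_{d,p}$ then split according to the role of the $\emptyset'$-frog itself: the first, with prefactor $(px + (1-p)/d) = p\cdot x + (1-p)/d \cdot 1$, handles the case that the $\emptyset'$-frog makes a terminal move (either up to $\emptyset$, or down to the already-activated initial sibling $o_{i_1}$), and the factor $\binom{d-1}{k-1}$ counts the $k-1$ additional siblings that get activated by subtree-return frogs; the second, with prefactor $(d-1)(1-p)/d$, handles the case that the $\emptyset'$-frog itself activates a new sibling, and the factor $\binom{d-2}{k-2}$ counts the remaining $k-2$ activations.

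The polynomials $P_k$ and $Q_k$ arise from enumerating the time-orderings of activation events: across the i.i.d.\ batches of emerging frogs from each activated subtree, one must designate which frogs serve as the triggering frogs of subsequent activations (and, correspondingly, how many lethal siblings they face at the time of their decision) and which frogs simply make terminal moves. An inclusion--exclusion argument on the number of triggering frogs contained in the most-recently activated subtree yields the stated recursion $P_{k+1}(z_1,\ldots,z_{k+1}) = z_{k+1}^{k+1} - \sum_{l=1}^{k} \binom{k}{l-1} z_{k+1}^{k+1-l} P_l(z_1,\ldots,z_l)$, and the analogous one for $Q_k$; these can be verified by induction on $k$, isolating the final activation event in each ordering. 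The main obstacle is precisely this combinatorial bookkeeping: one has to correctly match, across all possible orderings and triggering patterns, (a) the changing number of lethal siblings seen by frogs acting at different times, (b) the distinction between the $\emptyset'$-frog's unnormalized step weights $p$ and $(1-p)/d$ versus the nbFM-normalized weights appearing inside $c^{(k-1)}_{d,p}$ for subtree-return frogs, and (c) the asymmetric role played by $o_{i_1}$ (always activated by the initial frog) versus the other siblings that may or may not become activated depending on the downstream dynamics. Collapsing this full enumeration into the compact forms $\binom{d-1}{k-1}P_k(Z_1,\ldots,Z_k)$ and $\binom{d-2}{k-2}Q_k(Z_1,\ldots,Z_k)$ is the technical heart of the argument.
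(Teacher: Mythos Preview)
Your outline captures the correct architecture: decompose by the $\emptyset'$-frog's first move (giving the $P$-sum versus $Q$-sum split with the prefactors you identify), then by the set of subtrees of $\emptyset'$ that ever get activated, and use self-similarity to express each subtree's contribution through $g_{d,p}$. Your reading of $c^{(k-1)}_{d,p}$ is also essentially right, and matches the paper's Lemma~\ref{lem:binomial}, which shows that $g_{d,p}\circ c^{(k-1)}_{d,p}(x) = \E\bigl(x^{V^\ast_{j\to\emptyset}}\mathbf 1_{B_d(o_j;J)}\mid \mathbb T_d(o_j)\text{ entered}\bigr)$ whenever $|J|=d-k$.

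Where your proposal is thin is exactly the step you flag as the ``technical heart'': you assert that the recursion $P_{k+1}=z_{k+1}^{k+1}-\sum_{l}\binom{k}{l-1}z_{k+1}^{k+1-l}P_l$ follows from ``inclusion--exclusion on the number of triggering frogs contained in the most-recently activated subtree'' and ``isolating the final activation event in each ordering.'' Neither of these mechanisms naturally produces the leading term $z_{k+1}^{k+1}$: the last-activated subtree does contribute $z_{k+1}$ (all $k$ siblings lethal), but the earlier-activated subtrees saw \emph{fewer} lethal siblings at the moments their frogs acted, so a time-ordered enumeration does not obviously collapse to a product of $z_{k+1}$'s. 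The paper sidesteps time-ordering entirely with a clean device: working in $\SFM(k+1,p)$, it introduces a \emph{re-activated} model (rSFM) in which, after the ordinary dynamics run (Stage~I), every still-dormant subtree $\mathbb T_{k+1}(o_i)$ is artificially activated by an extra frog injected at $\emptyset'$ (Stage~II). In rSFM all $k+1$ subtrees are active and, by self-similarity and independence, each contributes $z_{k+1}$, giving total generating function $z_{k+1}^{k+1}$. Decomposing this total according to the set $\mathcal L$ of subtrees already activated in Stage~I --- there are $\binom{k}{l-1}$ choices with $o_1\in\mathcal L$, $|\mathcal L|=l$ --- and noting that on $\{\mathcal L=\{o_1,\dots,o_l\}\}$ the Stage~I contribution is exactly $\mathcal P_{k+1,l}$ while the $k+1-l$ Stage~II subtrees contribute an independent factor $z_{k+1}^{k+1-l}$, one reads off the recursion directly. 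This replaces your proposed enumeration over activation orderings by a single additive decomposition, and is what makes the argument close. A second ingredient you do not isolate is Lemma~\ref{lem:first-d-terms}, which shows that the polynomials $P_k,Q_k$ are the \emph{same} for every $d\ge k$ (only the substitution $z_i=g_d\circ c_d^{(i-1)}$ changes); this is what justifies proving the recursion once at $d=k+1$ and then invoking it for all larger $d$.
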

\begin{remark}   The self-consistency equation \eqref{eqn:self-consist} for
  $g_{d+1,p}(x)$ involves the same polynomials $P_{k},Q_{k}$ as that
  for $g_{d,p}$, but evaluated at the variables 
  \begin{align*}
  (z_{1}, \ldots z_{k}) &= \big (Z_{1, d+1, p}(g_{d+1, p}) (x), \ldots, Z_{k, d+1, p}(g_{d+1, p}) (x)\big )\\ &=\big( g_{d+1, p}\circ c_{d+1, p}^{(0)}(x), \ldots, g_{d+1, p}\circ c_{d+1, p}^{(k-1)}(x) \big)
  \end{align*}
in place of  
\begin{align*}
  (z_{1}, \ldots z_{k}) &= \big (Z_{1, d, p}(g_{d, p}) (x), \ldots, Z_{k, d, p}(g_{d, p}) (x)\big ) \hspace{1.4cm}\\
  &=\big( g_{d, p}\circ c_{d, p}^{(0)}(x), \ldots, g_{d, p}\circ c_{d, p}^{(k-1)}(x) \big)
  \end{align*}
  The additional terms $k=d+1$ in the sums
  defining $\mathcal{A}_{d+1,p}$ involve only the polynomials
  $P_{d+1},Q_{d+1}$, which, by equations \eqref{eqn:p-recursion} and
  \eqref{eqn:q-recursion}, are themselves defined in terms of the
  polynomials $P_{k},Q_{k}$ that appear in the definition of
  $\mathcal{A}_{d,p}$.  This suggests that the equations
  \eqref{eqn:self-consist} for the generating functions $g_{d,p}$
  can be established recursively, given the ``base'' case  $\mathcal
  A_{2,p}$.
\end{remark}

\begin{remark} The definition of the operator $\mathcal A_{d,p}$ does not ensure automatically that $\mathcal A_{d,p}h \in \mathcal I$ for every $h \in \mathcal I$. However, we will only need to be able to apply $\mathcal A_{d, p}$ to $g_{d,p}$ repeatedly, which is implied by the fixed point equation \eqref{eqn:self-consist}.
\end{remark}

\subsection{The self-similar structure} We begin by describing the self-similar
structure in $\SFM(d, p)$, which explains why $Z_{k, d, p}(g_{d, p})$'s, as define in \eqref{eqn:def-zk}, appear in the self-consistency equation. To simplify the notation, when there is no
ambiguity we omit the subscript $d,p$ and only write
$$g(x)=g_{d,p}(x) = \E(x^{V^{\ast}}) = \mathcal A g(x), \ \text{where
}V^{\ast}=V^{\ast}_{d,p}. $$
Recall that, in $\SFM$,
only one of the $d$ child vertices of the root $\varnothing$ could be
visited, and we call this vertex $\rootvertex'$ and denote its child
vertices by $o_{1}, \ldots, o_{d}$ as before. The $d$-ary subtree
rooted at any vertex $u$ is denoted by $\mathbb T_d(u)$.  For
$j = \rootvertex$ or $\rootvertex'$ define
\begin{align}
\label{eqn:defV-itoj}
    V^{\ast}_{i\to j} := \text{number of visits to vertex $j$ by all frogs originally placed in $\mathbb T_d(o_i)$}.
\end{align}
Denote by $f_{v}$ the frog initially placed at a vertex
$v\in \mathbb T_{d}$ and $\{f_{v}\to u\}$ the event that the frog
$f_{v}$ has visited the vertex $u$. Since only one frog is ever allowed to enter $\mathbb T_d(o_i)$ in
$\SFM$, conditional on the event that $\mathbb T_{d}(o_{i})$ has been
visited, $V^\ast_{i \to \rootvertex'}$ would have the same
distribution as $V^\ast$, which implies
\begin{equation}
\label{eqn:self-similar}
P(V^{\ast}=n) = P(V^\ast_{i\to \rootvertex'}=n|\mathbb T_{d}(o_{i}) \text{ is ever visited}).
\end{equation}
\begin{figure}[ht]
    \centering
    \includegraphics{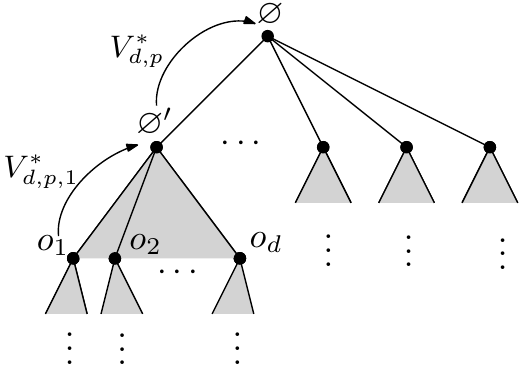}
    \caption{The self-similar structure in $\SFM(d,p)$.}
    \label{fig:self-similar}
\end{figure}
Since all frogs behave independently, given that
$V_{i\to \rootvertex' }^{\ast}=n\ge 1$, the number of visits from
$o_{i}$ to the root vertex $\rootvertex$ in $\SFM(d, p)$ via
$\rootvertex'$, $V^{\ast}_{i\to \rootvertex}$, follows a
Bin$\left(n, \frac{p}{p+(1-p)\frac{d-1}{d}}\right)$ distribution, and
thus the generating function satisfies
\begin{align}
\label{eqn:ss-binomial}
\E(x^{V^{\ast}_{i\to \rootvertex}} | V^{\ast}_{i\to \rootvertex'}=n) =\left(\frac{px}{p+(1-p)\frac{d-1}{d}}+\frac{(1-p)\frac{d-1}{d}}{p+(1-p)\frac{d-1}{d}}\right)^{n}. 
\end{align}
Lemma \ref{lem:binomial} is a generalization of the observation. For
any child vertex $o_{j}$ of $\rootvertex'$ and
$J\subseteq \{o_1,\ldots, o_d\}\setminus\{o_{j}\}$, define the event
\begin{align}
\label{eqn:eventB}
B_d(o_{j}; J) &:=\bigcap_{s\in J}\left\{\begin{array}{l}
\text{frogs initially placed on vertices of }\\ 
 \mathbb T_{d}(o_{j}) \text{ never enter subtrees } \mathbb T_{d}(s)
\end{array}\right\}
\end{align}
and if $J$ is the empty set, then $B_d(o_j; J)=\Omega$, the entire
probability space. We note that, if $|J|=k$, due to symmetry, the
probability that frogs in $\mathbb T_d(o_{j})$ never enter any $k$
other subtrees specified by $J$ is the same for all possible choices
of $J$.  
\begin{lemma} \label{lem:binomial} For any $d\ge 2$, $p\in [0, 1/2]$,
  $j \in [d]$, and $J\subseteq \{o_1,\ldots, o_d\}\setminus\{o_j\}$,
  we have
\[
\E (x^{V^{\ast}_{j\to \rootvertex}}\mathbf 1_{B_{d}(o_j; J)}|\mathbb T_{d}(o_{j}) \text{ is ever visited}) = g\circ c^{(d-1-|J|)}(x).
\]
where $c^{(k)}(x) = c_{d, p}^{(k)}(x)$ is as defined in \eqref{eqn:constC}, for $k=0, 1,\ldots, (d-1)$.
\end{lemma}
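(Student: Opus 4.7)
The plan is to combine the self-similarity relation \eqref{eqn:self-similar} with a per-visit factorization of the expectation. Set $\alpha := p/(p+(1-p)(d-1)/d)$, so that $c^{(k)}(x) = \alpha x + (k/(d-1))(1-\alpha)$. By \eqref{eqn:self-similar}, conditional on $\mathbb T_d(o_j)$ being visited, $N := V^*_{j\to\rootvertex'}$ has the same distribution as $V^*$, with probability generating function $g$.

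Next I would enumerate the $N$ visits to $\rootvertex'$ by frogs from $\mathbb T_d(o_j)$ in chronological order, recording at each visit $m$ the vertex $o_{k_m}$ from which the frog arrives and the subsequent non-backtracking step $U_m$. By the Markov property, $U_m$ is independent of the past given $k_m$, equal to $\rootvertex$ with probability $\alpha$ and to each $o_l$ with $l \neq k_m$ with probability $(1-\alpha)/(d-1)$. The quantity of interest factors as
\[
x^{V^*_{j\to\rootvertex}} \, \mathbf 1_{B_d(o_j; J)} = \prod_{m=1}^{N} Y_m, \qquad Y_m := x \, \mathbf 1_{\{U_m = \rootvertex\}} + \sum_{l\neq k_m} \mathbf 1_{\{U_m = o_l\}} \mathbf 1_{\{l\notin J\}},
\]
so $Y_m \in \{0,1,x\}$ according to whether step $m$ violates $B_d$, hits an allowed sibling, or moves up. A one-line computation gives $\E(Y_m \mid k_m) = c^{(d-1-|J|)}(x)$ whenever $k_m \notin J$. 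Moreover $k_1 = o_j \notin J$, and any later $k_m \in J$ would require a prior step into $\mathbb T_d(o_{k_m})$ with $o_{k_m}\in J$, which forces $Y_{m'} = 0$ for some $m' < m$; so $\prod_m Y_m$ automatically vanishes whenever any $k_m$ lies in $J$.

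Using this, I would factor the expectation visit-by-visit via the strong Markov property: define $M_n := (c^{(d-1-|J|)}(x))^{-(n\wedge N)} \prod_{m=1}^{n\wedge N} Y_m$ and verify that $(M_n)$ is a nonnegative martingale. On $\{M_n\neq 0\}$ every future factor has conditional mean exactly $c^{(d-1-|J|)}(x)$, while on $\{M_n = 0\}$ the process remains at $0$. Optional stopping at $N$ then yields
\[
\E\!\left( \prod_{m=1}^N Y_m \;\middle|\; \mathbb T_d(o_j)\text{ visited} \right) = \E\!\left( (c^{(d-1-|J|)}(x))^N \right) = g\!\left( c^{(d-1-|J|)}(x) \right),
\]
which is the lemma. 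The main obstacle is precisely this factorization step: although each $U_m$ is Markov given $k_m$, the total $N$ is a stopping time whose distribution is entangled with past step outcomes, and one must carefully argue that the ``forbidden'' outcomes get absorbed into an earlier zero factor so that the Markov conditional means combine correctly across the random number of visits.
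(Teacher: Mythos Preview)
Your approach is correct, and at heart it is the same per-visit factorization the paper uses; but you are making the argument substantially harder than it needs to be, and your stated ``main obstacle'' is not actually present.

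Two simplifications collapse your proof to the paper's direct computation. First, in the non-backtracking model every frog initially in $\mathbb T_d(o_j)$ that reaches $\rootvertex'$ necessarily arrives there \emph{from} $o_j$: its path goes up until it turns, and after turning it never goes up again, so the last upward step before $\rootvertex'$ is $o_j\to\rootvertex'$. Hence $k_m=j$ for every $m$, and the whole discussion of the case $o_{k_m}\in J$ is vacuous. Second, and more importantly, $N=V^{\ast}_{j\to\rootvertex'}$ is determined entirely by the dynamics \emph{inside} $\mathbb T_d(o_j)$ (which frogs get activated and which of them climb all the way to $o_j$ and then to $\rootvertex'$); the choices $U_m$ made \emph{at} $\rootvertex'$ have no effect on anything inside $\mathbb T_d(o_j)$ because of non-backtracking. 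Thus $N$ is independent of the i.i.d.\ sequence $(U_m)_{m\ge 1}$, not merely ``entangled'' with it, and no martingale or optional-stopping argument is needed. Conditioning on $N=n$ one gets directly
\[
\E\!\left(\prod_{m=1}^N Y_m \,\middle|\, N=n\right)=\bigl(c^{(d-1-|J|)}(x)\bigr)^n,
\]
and summing against $P(V^{\ast}=n)$ yields $g\!\bigl(c^{(d-1-|J|)}(x)\bigr)$. This is exactly the paper's proof: condition on $V^{\ast}_{j\to\rootvertex'}=n$, observe that the $n$ step choices at $\rootvertex'$ are i.i.d.\ and independent of $n$, and compute the resulting multinomial expectation.
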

\begin{proof} We compute the expectation by conditioning on the number
  of visits from $o_j$ to $\rootvertex'$.
\begin{align*}
  &\E (x^{V^{\ast}_{j\to \rootvertex}}\mathbf 1_{B_{d}(o_j; J)}|\mathbb T_{d}(o_{j}) \text{ is ever visited}) \\
  &=\E [\E(x^{V^{\ast}_{j\to \rootvertex}}\mathbf 1_{B_{d}(o_j; J)} | V^{\ast}_{j\to \rootvertex'},\ \mathbb T_{d}(o_{j}) \text{ is ever visited})]\\
  &=\sum_{n=0}^{\infty} \sum_{k=0}^{n}x^{k}P (V^{\ast}_{j\to \rootvertex}=k \text{ and } B_{d}(o_j; J)|V_{j\to \rootvertex'}^\ast=n, \mathbb T_{d}(o_{j}) \text{ is ever visited})\\
  &\qquad \cdot P(V_{j\to\rootvertex'}^{\ast}=n|\mathbb T_{d}(o_{j}) \text{ is ever visited}).
\end{align*}
By \eqref{eqn:self-similar}, the last probability is equal to
$P(V^{\ast}=n)$. Moreover, on the event
$V_{j\to \rootvertex'} ^{\ast}= n \ge 1$, exactly $k$ of the $n$ frogs
that visit $\rootvertex'$ from $o_{j}$ would continue to move up to
the root $\rootvertex$ with probability
$$\binom{n}{k} \left[\frac{p}{p+(1-p)(d-1)/d}\right]^{k} \left[1-\frac{p}{p+(1-p)(d-1)/d}\right]^{n-k},$$
and, on $B_{d}(o_{j}; J)$, those $(n-k)$ frogs that do not move to the
root $\rootvertex$ must all avoid going to any vertex $s\in J$ with
probability $ \left[\frac{(d-1)-|J|}{d-1}\right]^{n-k} $. If
$V_{j\to \rootvertex'} ^{\ast}= 0$, then $V^{\ast}_{j\to \rootvertex}=0$
with probability one. We thus get
\begin{align*}
&\E (x^{V^{\ast}_{j\to \rootvertex}}\mathbf 1_{B_{d}(o_j; J)}|\mathbb T_{d}(o_{j}) \text{ is ever visited})\\
&=\sum_{n=0}^{\infty} \sum_{k=0}^{n}x^{k} \binom{n}{k} \left[\frac{p}{p+(1-p)(d-1)/d}\right]^{k} \left[1-\frac{p}{p+(1-p)(d-1)/d}\right]^{n-k}\\
&\qquad \cdot  \left[\frac{(d-1)-|J|}{d-1}\right]^{n-k} P(V^{\ast}=n)\\
&=\sum_{n=0}^{\infty}\left\{ \sum_{k=0}^{n}\binom{n}{k}\left(\frac{px}{p+(d-1)(1-p)/d}\right)^{k} \left(\frac{\frac{(d-1)(1-p)}{d}  \cdot \frac{d-1-|J|}{d-1}}{p+(d-1)(1-p)/d}\right)^{n-k}\right\}P(V^{\ast}=n)\\
&=\sum_{n=0}^{\infty}\left[\frac{dpx + (d-1-|J|)(1-p)}{dp + (d-1)(1-p)}\right]^{n}P(V^{\ast}=n)\\
&=\sum_{n=0}^{\infty} [c^{(d-1-|J|)}(x)]^{n}P(V^{\ast}=n)=\E \{ \,[c^{(d-1-|J|)}(x)]^{V^{\ast}}\} = g\circ c^{(d-1-|J|)}(x).
\end{align*}
\end{proof}
\begin{remark} When $J$ is the empty set, frogs initially placed at
  vertices of $\mathbb T_{d}(o_{j})$ may go to any of the other
  $(d-1)$ subtrees (i.e., no constraints) and thus
  $\mathbf 1_{B_{d}}(o_{j}; J)=\mathbf 1$. We have
\begin{align}
\label{eqn:binomialsp}
\E (x^{V^{\ast}_{j\to \rootvertex}}|\mathbb T_{d}(o_{j}) \text{ is ever visited}) = g\circ c^{(d-1)}(x).
\end{align}
\end{remark}
\begin{remark} The composition of  $g$ with $c^{(k-1)}$ is exactly the generating function for the number of visits to the root by frogs in a subtree $\mathbb T_{d}(o_{j})$ (if the branch is activated) on the event that none of these frogs ever go to some other $(k-1)$ subtrees.
\end{remark}
\subsection{The $P$- and $Q$- polynomials}
For the clarity of the recursion, we now put back the subscript $d$ but still
omit the parameter $p$, since $p$ is unchanged.  In this section, we explain what the $P$- and $Q$-polynomials represent in the self consistency equation \eqref{eq:Adp-def}.
We begin by decomposing $g_d(x)$
according to the number of child vertices of $\rootvertex'$ that have
been ever visited. Write
\begin{align}
\notag
g_d(x) &= \sum_{k=1}^{d}\sum_{\substack{S: |S|=k, \\ S\subseteq [d]}}\E(x^{V_d^{\ast}}\mathbf1_{\cap_{i\in S}\{\mathbb T_{d}(o_{i})\text{ is ever visited} \}}\mathbf1_{\cap_{j\in S^{c}}\{\mathbb T_{d}(o_{j})\text{ is not visited} \}})\\
\label{eqn:g-1stdecomp}
& = \sum_{k=1}^{d} \binom{d}{k} k \E(x^{V_{d}^{\ast}}\mathbf 1_{\{f_{\rootvertex} \to o_{1}\}} \mathbf 1_{A_{d,k}})
\end{align}
where 
\begin{align*}
A_{d,k}&:= \bigcap_{i=1}^{k}\{\mathbb T_{d}(o_{i})\text{ is visited} \}\bigcap_{j=k+1}^{d}\{\mathbb T_{d}(o_{j})\text{ is not visited} \}.
\end{align*}
The factors $\binom{d}{k}$ and $k$ in \eqref{eqn:g-1stdecomp} 
are due to symmetry: (i) any
subset $S\subseteq [d]$ of size $k$ would contribute to $g_d(x)$ in
the same way and there are $\binom{d}{k}$ choices for $S$, and (ii) on
the event that exactly
$\mathbb T_{d}(o_{1}),\ldots, \mathbb T_{d}(o_{k})$ are ever visited,
the frog $f_{\rootvertex}$ originating from the root vertex  is equally
like to visit any one of them, resulting in the additional factor $k$. 

We further decompose each summand into two cases, depending on whether or
not $f_{\rootvertex'}$ activates a new subtree
$\mathbb T_{d}(o_{i})\ne \mathbb T_{d}(o_{1})$.  Define events
\begin{align*}
D_1&:=\{f_{\rootvertex'} \to o_{1}\} \cup\{f_{\rootvertex'}\to \rootvertex\}, \quad D_2 := D_1^{c} =  \bigcup_{i=2}^{d}\{f_{\rootvertex'} \to o_{i}\}.
\end{align*}
Note that on $D_2$, at least two branches of $\rootvertex'$ are visited,  the intersection $D_2 \cap A_{d, 1}$ is empty.
We then express $g_{d}(x)$ as two summations of $(2d-1)$ terms in total.
\begin{align}
\notag
g_{d}(x) & =\sum_{k=1}^{d} \binom{d}{k} k \big[\E(x^{V_{d}^{\ast}}\mathbf 1_{\{f_{\rootvertex} \to o_{1}\}} \mathbf 1_{D_1\cap A_{d,k}}) +  \E(x^{V_{d}^{\ast}}\mathbf 1_{\{f_{\rootvertex} \to o_{1}\}} \mathbf 1_{D_2\cap A_{d,k}})\big]\\
\label{eqn:dk1}
&= \sum_{k=1}^{d} \binom{d}{k} k \E(x^{V_{d}^{\ast}}\mathbf 1_{\{f_{\rootvertex} \to o_{1}\}} \mathbf 1_{D_1\cap A_{d,k}}) \\
\label{eqn:dk2}
&\qquad + \sum_{k=2}^{d} \binom{d}{k} k \E(x^{V_{d}^{\ast}}\mathbf 1_{\{f_{\rootvertex} \to o_{1}\}} \mathbf 1_{D_2 \cap A_{d,k}}).
\end{align} 
On $D_1$, since $f_{\rootvertex'}$ goes to either $o_1$ (with
probability $(1-p)/d$) or the root vertex $\rootvertex$ (with
probability $p$, which contributes one visit to the root), then
$V_d^\ast$ can be expressed as
\[
V_d^\ast = \mathbf 1_{\{f_{\rootvertex'}\to \rootvertex\}} + \sum_{j=1}^d V^\ast_{d, j\to \rootvertex}.
\]
Here the quantities $V^\ast_{d, j\to \rootvertex}$ are the same as $V^\ast_{j \to \rootvertex}$ in \eqref{eqn:defV-itoj} except that we now put back the subscript $d$ to emphasize that we are working with the $\SFM(d, p)$ case. 
By independence of $f_\rootvertex$, $f_\rootvertex'$ and frogs in $\mathbb T_d(o_i)$, each summand in \eqref{eqn:dk1} can be written as
\begin{align*}
&\qquad \binom{d}{k}\,  k\,  \E(x^{V_{d}^{\ast}}\mathbf 1_{\{f_{\rootvertex} \to o_{1}\}} \mathbf 1_{D_1\cap A_{d,k}}) \\
&=\binom{d}{k}\, k\, P(f_{\rootvertex}\to o_{1}) \E(x^{\mathbf 1_{\{f_{\rootvertex'} \to \rootvertex \}}}\mathbf 1_{D_{1}}) \E\left(x^{\sum_{j=1}^k V_{d, j\to \rootvertex}^\ast}\mathbf 1_{A_{d, k}} | f_\rootvertex \to o_1, D_1\right)
\\
&=\left(px+\frac{1-p}{d}\right)\binom{d-1}{k-1}\E\left(x^{\sum_{j=1}^k V_{d, j\to \rootvertex}^\ast}\mathbf 1_{A_{d, k}} | f_\rootvertex \to o_1, D_1\right). 
\end{align*}
Similarly, each summand in \eqref{eqn:dk2} can be written as
\[
\binom{d}{k} k  \E(x^{V_{d}^{\ast}}\mathbf 1_{\{f_{\rootvertex} \to o_{1}\}} \mathbf 1_{D_2\cap A_{d,k}}) = \frac{(d-1)(1-p)}{d}\binom{d-2}{k-2}\E\left(\left.x^{\sum_{j=1}^k V_{d,j\to \rootvertex}^\ast}\mathbf 1_{A_{d, k}} \right| \begin{array}{c}  f_\rootvertex \to o_1\\
 f_{\rootvertex'} \to o_2
 \end{array}\right),
\]
where we used fact that on $A_{d,k}$, if $f_{\rootvertex'}$ activates
a new branch other than $\mathbb T_{d}(o_{1})$, it is equally likely
to go to any of the other $(k-1)$ vertices $o_2, \ldots, o_k$.  Denote
these conditional expectations by
\begin{align}
\label{eqn:def-Pdk}
\mathcal P_{d,k}(x)&:= \E\left(x^{\sum_{j=1}^k V_{d,j\to \rootvertex}^\ast}\mathbf 1_{A_{d, k}} | f_\rootvertex \to o_1, D_1\right)\\
\label{eqn:def-Qdk}
\mathcal Q_{d,k}(x) &:= \E\left(\left.x^{\sum_{j=1}^k V_{d, j\to \rootvertex}^\ast}\mathbf 1_{A_{d, k}} \right| \begin{array}{c}  f_\rootvertex \to o_1\\
 f_{\rootvertex'} \to o_2
 \end{array}\right).
\end{align}
and we have
\begin{align}
\label{eqn:gd-final}
g_{d}(x) = \left(px + \frac{1-p}{d}\right)\sum_{k=1}^{d}\binom{d-1}{k-1} \mathcal P_{d,k}(x) + \frac{(d-1)(1-p)}{d}\sum_{k=2}^{d}\binom{d-2}{k-2}\mathcal Q_{d, k}(x).
\end{align}

Comparing \eqref{eqn:gd-final} with \eqref{eq:Adp-def}, to show that $g_{d}$ is a fixed point of $\mathcal A_{d, p}$,  we require that the polynomials $P_{k}: \mathbb R^{k}\to \mathbb R$ and $Q_{k}: \mathbb R^{k} \to \mathbb R$, constructed recursively through equations \eqref{eqn:p-recursion} and \eqref{eqn:q-recursion}, to satisfy
\begin{align}
\label{eqn:relation1}
    \mathcal P_{d, k}(x) &= P_{k}(g_d\circ c_d^{(0)}(x),\ldots, g_d\circ c_d^{(k-1)}(x)),\\
\label{eqn:relation2}
    \mathcal Q_{d, k}(x) &= Q_{k}(g_d\circ c_d^{(0)}(x), \ldots, g_d\circ c_d^{(k-1)}(x)).
\end{align}
We point out that these $P$- and $Q$-polynomials in the $z$ variables would not depend on the degree  $d$ of the tree, and the dependence of the conditional expectations \eqref{eqn:def-Pdk} and \eqref{eqn:def-Qdk} on $d$ is reflected only when plugging in $z_i = g_d \circ c_d^{(i-1)}(x)$.  Consequently, the first $(2d-1)$ terms in the self-consistence equation for $g_{d+1}(x)$ would share the same ``structures'' as the $(2d-1)$ terms for $g_d(x)$.

\begin{lemma}\label{lem:first-d-terms}
For each $d\ge 2$ and $k\le d$, $\mathcal P_{d,k}(x)$ and $\mathcal Q_{d,k}(x)$, as defined in \eqref{eqn:def-Pdk} and \eqref{eqn:def-Qdk}, respectively, are polynomials of $g_{d}\circ c_{d}^{(0)}(x), \ldots g_{d}\circ c_{d}^{(k-1)}(x)$, where $g_{d}(x):=\E(x^{V^{\ast}_{d}})$ and $c_{d}^{(i)}:[0,1]\to [0,1]$ are linear functions defined in \eqref{eqn:constC}. Moreover, if there are polynomials $P_k: \mathbb R^k \to \mathbb R$ and $Q_k: \mathbb R^k \to \mathbb R$ such that \eqref{eqn:relation1} and \eqref{eqn:relation2} hold for some $d \ge k$,
then 
\begin{align*}
    \mathcal P_{d+1, k} (x) &= P_{k}(g_{d+1}\circ c_{d+1}^{(0)}(x),\ \ldots, g_{d+1}\circ c_{d+1}^{(k-1)}(x))\\
    \mathcal Q_{d+1, k} (x) &= Q_{k}(g_{d+1}\circ c_{d+1}^{(0)}(x),\ \ldots, g_{d+1}\circ c_{d+1}^{(k-1)}(x)).
\end{align*}
\end{lemma}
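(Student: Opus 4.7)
The plan is to prove Lemma~\ref{lem:first-d-terms} by induction on $k$, simultaneously establishing the polynomial representations of $\mathcal P_{d,k},\mathcal Q_{d,k}$ and the $d$-invariance of $P_k,Q_k$. The principal tools are Lemma~\ref{lem:binomial}, which packages each branch's contribution as $g_d\circ c_d^{(d-1-|J|)}$ and absorbs the $d$-dependence into the substitutions $z_i=g_d\circ c_d^{(i-1)}$, together with the self-similar independence that once $o_j$ is activated the process in $\mathbb T_d(o_j)$ is an independent copy of $\SFM(d,p)$.

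For the base cases: at $k=1$ the conditioning $\{f_\rootvertex\to o_1\}\cap D_1$ makes $o_1$ the only externally-activated branch, so $A_{d,1}$ reduces to $B_d(o_1;\{o_2,\ldots,o_d\})$ and Lemma~\ref{lem:binomial} with $|J|=d-1$ yields $\mathcal P_{d,1}(x)=g_d\circ c_d^{(0)}(x)=z_1=P_1(z_1)$. For $\mathcal Q_{d,2}$ the extra conditioning $f_{\rootvertex'}\to o_2$ externally activates both $o_1$ and $o_2$; their subtrees then evolve independently, so two applications of Lemma~\ref{lem:binomial} with $|J|=d-2$ give $\mathcal Q_{d,2}(x)=z_2^2=Q_2(z_1,z_2)$.

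For the inductive step (the $\mathcal P$ case; $\mathcal Q$ is parallel) I would establish the identity
\begin{displaymath}
z_{k+1}^{k+1}\;=\;\sum_{l=1}^{k+1}\binom{k}{l-1}\,z_{k+1}^{k+1-l}\,\mathcal P_{d,l}(x),
\end{displaymath}
from which \eqref{eqn:p-recursion} for $P_{k+1}$ follows by isolating the $l=k+1$ term and substituting the inductive hypothesis $\mathcal P_{d,l}=P_l(z_1,\ldots,z_l)$ for $l\le k$. My approach to the identity is a \emph{real-plus-fictitious} coupling: on an enlarged space, adjoin $k+1$ i.i.d.\ variables $\widetilde V_1,\ldots,\widetilde V_{k+1}$ independent of the $\SFM$, each with generating function $z_{k+1}$, and set $Y_j:=V^\ast_{d,j\to\rootvertex}$ if $o_j$ is naturally activated in $\SFM(d,p)$ and $Y_j:=\widetilde V_j$ otherwise. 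I would then compute $\E[\mathbf 1_W\prod_{j=1}^{k+1}x^{Y_j}]$, with $W:=\{o_{k+2},\ldots,o_d\text{ not visited}\}$, in two ways. First, decomposing by the actually-activated subset $S\ni o_1$ of size $l$ and using the independence of the $\widetilde V_j$'s yields the right-hand side: $\binom{k}{l-1}$ counts the subsets containing $o_1$, $\mathcal P_{d,l}$ arises from the real-activated positions by relabeling, and $z_{k+1}^{k+1-l}$ from the fictitious copies. Second, conditioning on the activation order and using the self-similar independence of activated subtrees together with Lemma~\ref{lem:binomial} applied with $|J|=d-1-k$ to each branch shows that the $Y_j$'s are conditionally independent with common conditional GF $z_{k+1}$ (whether or not $j\in S^\star$), so the expectation equals $z_{k+1}^{k+1}$. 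The parallel argument for $\mathcal Q_{d,k+1}$, with the extra conditioning $f_{\rootvertex'}\to o_2$ forcing $S$ to contain both $o_1,o_2$, yields \eqref{eqn:q-recursion} with multiplicity $\binom{k-1}{l-2}$.

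The $d$-invariance of $P_k,Q_k$ follows at once: the combinatorics in steps (i)--(ii) involves only subsets of $\{o_1,\ldots,o_{k+1}\}$ and the values $z_i$, so rerunning the identical argument at degree $d+1$ produces the same polynomials applied to $z_i^{(d+1)}=g_{d+1}\circ c_{d+1}^{(i-1)}$. The main technical obstacle is making the second evaluation rigorous: one must carefully verify, by conditioning on the activation order, that paths in different activated subtrees are genuinely independent (a consequence of the self-similar property), that Lemma~\ref{lem:binomial} gives the common conditional GF $z_{k+1}$ to each real $V^\ast_j$ restricted to $W_j$, and that the fictitious $\widetilde V_j$'s cleanly supply $z_{k+1}$ to the non-activated positions. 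The intricate bookkeeping of how the event $W$ interacts with the activation chain and how Lemma~\ref{lem:binomial} is iterated across activated subtrees is the crux.
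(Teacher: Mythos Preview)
Your approach differs from the paper's. The paper proves the lemma by a direct inclusion--exclusion: it decomposes $A_{d,k}$ according to the ``flow pattern'' $(J(o_1),\dots,J(o_k))$ recording which sibling branches are entered from each $o_m$, rewrites each indicator $\mathbf 1\{J(o_m)=J_m\}$ as an alternating sum of indicators $\mathbf 1_{B_d(o_m;\cdot)}$, applies Lemma~\ref{lem:binomial} term by term, and observes that only the superscripts $|J_m|-|S_m|\in\{0,\dots,k-1\}$ appear, so the same polynomial in $z_1,\dots,z_k$ arises for every $d\ge k$. The recursion \eqref{eqn:p-recursion} is established separately (in the proof of Proposition~\ref{prop:recursive}) via a re-activated process, but only at the single degree $d=k+1$. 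Your plan instead runs that re-activation argument at arbitrary $d\ge k+1$, obtaining the recursion and the $d$-invariance simultaneously by induction on $k$; this is more economical and is a legitimate alternative.

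The identity you target is correct, and your first decomposition is fine. The gap is the second evaluation. Conditioning on the activation order does \emph{not} make the real $Y_j=V^\ast_{d,j\to\rootvertex}$ have conditional generating function $z_{k+1}$: the event $\{S^\star=S\}$ is a function of the sets $(J(o_i))_{i\in S}$, which are correlated with the $V^\ast_{d,i\to\rootvertex}$. For instance, on $\{S^\star=\{o_1\}\}=B_d(o_1;\{o_2,\dots,o_d\})$ the conditional law of $V^\ast_{d,1\to\rootvertex}$ is governed by $z_1$, not $z_{k+1}$. More decisively, if your conditional claim held then summing over $S^\star$ would yield $z_{k+1}^{k+1}\,P(W)$, not $z_{k+1}^{k+1}$.

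The fix is to replace the fictitious $\widetilde V_j$'s by the real Stage-II process (the paper's rSFM idea, carried out at degree $d$): externally activate every $o_j$ with $j\le k+1$ that was not naturally activated, set $V'_j$ to be its visit count, and let $W'=\bigcap_{j=1}^{k+1}B_d(o_j;\{o_{k+2},\dots,o_d\})$. Branch independence gives directly
\[
\E\Big[\mathbf 1_{W'}\prod_{j=1}^{k+1}x^{V'_j}\Big]=\prod_{j=1}^{k+1}\E\big[x^{V'_j}\mathbf 1_{B_d(o_j;\{o_{k+2},\dots,o_d\})}\big]=z_{k+1}^{k+1}.
\]
Now decompose this by the Stage-I activated set $S$: on $\{\text{act}=S\}$ the constraints $B_d(o_j;\cdot)$ for $j\in S$ are automatic (frogs in $S$ visit nothing outside $S$), while for $j\notin S$ the pair $(V'_j,B_d(o_j;\cdot))$ is independent of Stage~I and contributes $z_{k+1}$. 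This recovers $\sum_{l}\binom{k}{l-1}z_{k+1}^{k+1-l}\mathcal P_{d,l}(x)$ and proves your identity without any conditioning on activation order.
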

We defer the proof of Lemma \ref{lem:first-d-terms} to the end of the section.

\subsection{The base case.} 
We explain in detail how we derive the self-consistency equation in the $\SFM(2, p)$
case. We then give an example to show how to construct the self-consistency equation in the $\SFM(3, p)$ case using Lemma \ref{lem:first-d-terms} and equations \eqref{eqn:p-recursion} and \eqref{eqn:q-recursion}.

According to \eqref{eqn:gd-final}, the generating function $g_{2}(x) $ can be written as
\begin{align}
\label{eqn:SFM2case}
g_2(x) =\left(px+\frac{1-p}{2}\right)\left[\mathcal P_{2,1}(x) + \mathcal P_{2, 2}(x)\right] + \frac{(1-p)}{2}\mathcal Q_{2,2}(x).
\end{align}
By definition \eqref{eqn:def-Pdk}, the first term 
\begin{align*}
    \mathcal P_{2,1}(x)&= \E\left(x^{V^\ast_{2, 1\to \rootvertex}}\mathbf 1_{A_{2,1}}|f_\rootvertex \to o_1, D_1\right)\\
    &=\E\left(x^{V^\ast_{2,1\to \rootvertex}}\mathbf 1_{B_2(o_1;\{ o_2\})}|f_\rootvertex \to o_1, D_1\right)\\
    &=\E\left(x^{V^\ast_{2,1\to \rootvertex}}\mathbf 1_{B_2(o_1;\{ o_2\})}|\mathbb T_{2}(o_{1})\text{ is visited}\right) \\
    &= g_2\circ c_2^{(0)}(x),\ \  \text{by Lemma \ref{lem:binomial}}.
\end{align*}      
The second equality above is because on the event
$\{f_\rootvertex \to o_1\}\cap D_1$, the event $A_{2,1}$ is equivalent to that ``no frogs in $\mathbb T_{2}(o_{1})$ ever enter the subtree $\mathbb T_{2}(o_{2})$'', which is exactly 
$B_2(o_1; \{o_2\})$,  defined in \eqref{eqn:eventB}. 
The third equality is due to the independence of frogs in the
subtree $\mathbb T_2(o_1)$ and the frog
$f_{\rootvertex'}$, that is, the number of visits to the root
vertex by frogs in $\mathbb T_{2}(o_{1})$ only depends on whether or
not $\mathbb T_{2}(o_{1})$ is ever visited but not where $f_{\rootvertex'}$ goes.

For $\mathcal Q_{2,2}(x)$, on the event
$\{f_\rootvertex \to o_1\}\cap \{f_{\rootvertex'} \to o_2\}$, we have
$\mathbf 1_{A_{2,2}}=1$. By independence of frogs initially placed in
subtree $\mathbb T_2(o_1)$ and those in $\mathbb T_2(o_2)$,
\begin{align*}
    \mathcal Q_{2,2}(x)&= \E\left(x^{V^\ast_{2,1\to \rootvertex}+V^\ast_{2, 2\to \rootvertex}}\mathbf 1_{A_{2,2}}|f_\rootvertex \to o_1, f_{\rootvertex'} \to o_2\right)\\
    &=\E\left(x^{V^\ast_{2,1\to \rootvertex}}|\mathbb T_{2}(o_{1})\text{ is visited} \right) \E\left( x^{V^\ast_{2,2\to \rootvertex}}|\mathbb T_{2}(o_{2})\text{ is visited}\right)\\
    & = [g_2\circ c_2^{(1)}(x)]^2, \ \  \text{by Lemma \ref{lem:binomial}}.
\end{align*}
Finally, conditional on $\{f_\rootvertex \to o_1\}\cap D_1$, the event
$A_{2,2}$ is the same as that ``some frog initially placed in the
subtree $\mathbb T_2(o_1)$ has entered $\mathbb T_{2}(o_2)$''. Denote
this event by $G$, and we have $G^c = B_2(o_1; \{o_2\})$ on the event
$\{f_\rootvertex \to o_1\}\cap D_1$. Knowing the subtree
$\mathbb T_2(o_2)$ has been visited, the number of visits to the root
vertex by frogs in $\mathbb T_2(o_2)$ does not depend on the behavior
of the frogs in $\mathbb T_2(o_1)$, $f_\rootvertex$ or
$f_\rootvertex'$.  This gives
\begin{align*}
    \mathcal P_{2,2}(x) &= \E(x^{V^\ast_{2, 1\to \rootvertex}+V^\ast_{2, 2\to \rootvertex}}\mathbf 1_{A_{2,2}}|f_\rootvertex \to o_1, D_1)\\
    &=\E(x^{V^\ast_{2,1\to \rootvertex}+V^\ast_{2,2\to \rootvertex}}\mathbf 1_{G}|f_\rootvertex \to o_1)\\
    &=\E(x^{V^\ast_{2,1\to \rootvertex}}\mathbf 1_{G}|f_\rootvertex \to o_1)\, \E(x^{V^\ast_{2,2\to \rootvertex}}|G)\\
    &=\E(x^{V^\ast_{2,1\to \rootvertex}}(\mathbf 1 - \mathbf 1_{B_2(o_1; \{o_2\})})|f_\rootvertex \to o_1)\,\E(x^{V^\ast_{2,2\to \rootvertex}}|\mathbb T_2(o_2)\text{ is activated})\\
    &=g_2\circ c_2^{(1)} (x) [g_2\circ c_2^{(1)} (x) - g_2\circ c_2^{(0)} (x)], \ \  \text{by Lemma \ref{lem:binomial}}.
\end{align*}
Combining all computations above, we get
\begin{align*}
g_2(x) &=   \left [ px+\frac{1-p}{2}\right] \left\{ g_2\circ c_2^{(0)}(x) + g_2\circ c_2^{(1)}(x)\left[g_2\circ c_2^{(1)}(x) - g_2\circ c_2^{(0)}(x)\right]\right\} + \frac{1-p}{2} [g_2\circ c_2^{(1)}(x)]^2.
\end{align*}
Thus, if we define polynomials 
\begin{align*}
    P_{1}(z_1)&:=z_1\\ 
    P_{2}(z_1, z_2)&:=z_2(z_2-z_1)\\
    Q_{2}(z_1, z_2)&:=z_2^2,
\end{align*}
then we obtain
\begin{align*}
    \mathcal P_{2,1}(x)&=P_{1}(g_2\circ c_2^{(0)}(x))\\ 
    \mathcal P_{2,2}(x)&=P_{2}(g_2\circ c_2^{(0)}(x),\ g_2\circ c_2^{(1)}(x))\\
    \mathcal Q_{2,2}(x)&=Q_{2}(g_2\circ c_2^{(0)}(x),\ g_2\circ c_2^{(1)}(x)).
\end{align*}

\begin{remark}
With $p=1/3$, we have $\frac{1-p}{2}=\frac{1}{3}$, $px + \frac{1-p}{2}=\frac{x+1}{3}$, $c_2^{(0)}(x)=x/2$ and $c_2^{(1)}(x) = (x+1)/2$, and the above recovers Equation (1) in \cite{HJJ1} derived for $\SFM(2, 1/3)$:
\begin{align}
    \label{eqn:sfm2-eqn}
g_2 (x)  = \frac{x+2}{3}g_2\left(\frac{x+1}{2}\right)^2 + \frac{x+1}{3}g_2\left(\frac{x}{2}\right)\left[1-g_2\left(\frac{x+1}{2}\right)\right].
\end{align}
\end{remark}

\begin{example} Find the self-consistency equation for $g_3(x)=\E(x^{V^{\ast}_{3}})$ for $\SFM(3, p)$. \\ 
First, according to \eqref{eqn:gd-final}, $g_3(x)$ can be written as 
\begin{align}
\notag
g_3(x) &= \left(px + \frac{1-p}{3}\right)\sum_{k=1}^3\binom{3-1}{k-1}\mathcal P_{3,k}(x) + \frac{2(1-p)}{3}\sum_{k=2}^3\binom{3-2}{k-2}\mathcal Q_{3,k}(x) \\
\notag
&=\left(px + \frac{1-p}{3}\right)\left[ \mathcal P_{3,1}(x) + 2 \mathcal P_{3,2}(x) + \mathcal P_{3,3}(x)\right]\\ 
\label{eqn:defg3}
& \qquad +\frac{2(1-p)}{3}\left[\mathcal Q_{3,2}(x) + \mathcal Q_{3,3}(x)\right],
\end{align}
in which each term is related to a $P$- or $Q$- polynomial via equations \eqref{eqn:relation1} and \eqref{eqn:relation2}. \\
Secondly, the computation of the $\SFM(2,p)$ case gives the $P$- and $Q$- polynomials
\begin{align*}
&P_{1}(z_1) = z_1; \quad P_{2}(z_1, z_2) = z_2(z_2-z_1);\quad Q_{2}(z_1,z_2) = z_2^2,
\end{align*}
whereas the polynomials $P_{3}$ and $Q_{3}$ can be easily deduced from \eqref{eqn:p-recursion} and \eqref{eqn:q-recursion}, i.e., 
\begin{align*}
P_{3}(z_1,z_2, z_3) &= z_3^3 - z_3^2 P_{1}(z_1) - 2z_3 P_{2}(z_1,z_2)\\
&=z_3^3 - z_3^2 z_1 - 2z_3 z_2(z_2-z_1);\\
Q_{3}(z_1,z_2, z_3) &= z_3^3 - z_3 Q_{2}(z_1, z_2)\\
&=z_3^3 - z_3z_2^2. 
\end{align*}
Finally, combining \eqref{eqn:relation1}, \eqref{eqn:relation2} and \eqref{eqn:defg3}, we have
\begin{align*}
    g_3(x) &=\left(px + \frac{1-p}{3}\right)\left\{z_3^3 +(1-z_3^2) z_1 + 2z_2(z_2-z_1)(1-z_3)\right\}\\ 
& \qquad +\frac{2(1-p)}{3}\left[z_3^3 + (1-z_3) z_2^2  \right], 
\end{align*}
where $z_k = g_3\circ c_3^{(k-1)}(x) = g_3\left(\frac{3px + (k-1)(1-p)}{p+2}\right)$ for $k=1,2,3$. This is the desired self-consistency equation for $g_3$.
\end{example}
\begin{remark}
The corresponding operator $\mathcal A_{3}=\mathcal A_{3,p}$ in this case can be defined according to \eqref{eq:Adp-def}: for any $h \in \mathcal I$, $\mathcal A_{3}$ maps $h$ to a function $\mathcal A_{3}h$ whose value at any $x\in [0,1)$ is given by
\begin{align*}
(\mathcal A_{3}h)(x) &=\left(px + \frac{1-p}{3}\right)\sum_{k=1}^3\binom{3-1}{k-1}P_{k}(Z_{1}(h)(x), \ldots, Z_{k}(h)(x))\\ 
&\quad + \frac{2(1-p)}{3}\sum_{k=2}^3\binom{3-2}{k-2}Q_{k}(Z_{1}(h)(x), \ldots, Z_{k}(h)(x)),
\end{align*}
where, as defined in \eqref{eqn:def-zk} and \eqref{eqn:constC}, 
$$Z_{k}(h)(x) =h\circ c_{3,p}^{(k-1)}(x)= h\left(\frac{px+(1-p)(k-1)/3}{p+2(1-p)/3}\right), \quad \text{for }k=1,2,3. $$
 
\end{remark}
\subsection{Proof of Proposition \ref{prop:recursive}} It suffices to prove Lemma \ref{lem:first-d-terms} and the recursive relations that the $P$- and $Q$-polynomials satisfy. 
\begin{proof}[Proof of Lemma \ref{lem:first-d-terms}] Starting from the definition of $\mathcal P_{d, k}(x)$ in \eqref{eqn:def-Pdk}, we will first show that 
\[
\mathcal P_{d,k}(x):= \E\left(x^{\sum_{j=1}^k V_{d, j\to \rootvertex}^{\ast}}\mathbf 1_{A_{d, k}} | f_\rootvertex \to o_1, D_1\right)
\]
can be written as a polynomial of 
$(g_{d}\circ c^{(0)}_{d}(x), \ldots, g_{d}\circ c^{(k-1)}_{d}(x))$.  For any $o_i \in  \{o_1,\ldots, o_d\}$, define $J(o_{i})$ to be the subset 
$$
J(o_i) :=  \{o_j \ne o_i: \text{some frog initially placed in }\mathbb T_d(o_i) \text{ has visited } o_j\} \subset \{o_{1}, \ldots, o_{d}\}.
$$
By slightly abusing the notation, we write $J(A)=\cup_{o_i \in A} J(o_i)$ for any $A\subset \{o_1,\ldots, o_d\}$.\\
At the beginning, only subtree $\mathbb T_d(o_1)$ is activated. After that, some frogs in $\mathbb T_d(o_1)$ may enter some other subtrees $\mathbb T_d(o_j)$ with $j\ne 1$ and wake up frogs there, and the newly-wakened frogs may continue to explore and activate new branches. Eventually, if event $A_{d,k}$ occurs, we have, exactly the subtrees $\mathbb T_d(o_1),\ldots, \mathbb T_d(o_k)$ are visited but none of the subtrees $\mathbb T_d(o_{k+1}),\ldots, \mathbb T_d(o_d)$ are ever visited.
Thus on the event $\{f_\rootvertex \to o_1\}\cap D_1$, $A_{d, k}$ is equivalent to
\begin{align}
\label{eqn:Adk-condition}
\bigcup_{m=0}^{k-1} J^{(m)}(o_1) = \{o_1,\ldots, o_k\}.
\end{align}
where $J^{(m)}$ denotes the $m$-fold composition of $J$ (the convention is $J^{(0)}(A) = A$). Note that we only need to do at most $(k-1)$ compositions here because if $J^{(m)}(o_1)$ does not contain any new vertex that did not appear previously in $J^{(0)}(o_1), \ldots, J^{(m-1)}(o_1)$, then no more new subtrees can be activated thereafter (i.e., by applying the  function $J$ even more times). We can thus write $\mathcal P_{d, k}$ as
\begin{align}
\label{eqn:Pdk-visited}
    \mathcal P_{d, k}(x) = &\sum_{\substack{J_{1}\ldots J_{k}\\(\star)}}\ \ \prod_{m=1}^{k}\E\left(x^{V^\ast_{d,m\to \rootvertex}}\mathbf 1_{J(o_m)=J_m}|\mathbb T_d(o_m)\text{ is ever visited}\right),
\end{align}
where the summation $(\star)$ is over all choices of $J_1, \ldots, J_k \subset \{o_1, \ldots, o_k\}$ such that if $J(o_m)=J_m$ for $m=1,2,\ldots, k$, then \eqref{eqn:Adk-condition} is satisfied. Each choice of $J_{1}, \ldots, J_{k}$ defines a pattern of frog flows among the $d$ branches attached to $\rootvertex'$. The above expression is because the exact number of visits to the root vertex $\rootvertex$ by frogs in $\mathbb T_d(o_m)$ only depends on two events: (a) whether the subtree $\mathbb T_d(o_m)$ is activated and (b) whether some frogs in $\mathbb T_d(o_m)$ have entered some other subtrees (thus these frogs can not visit the root $\rootvertex$ due to non-backtraking). For each term in \eqref{eqn:Pdk-visited}, we re-write the event $\{J(o_m)=J_m\}$ in terms of the events $B_d(o_m; J)$, whose definition is in \eqref{eqn:eventB}. We can immediately see the following equivalence: for any set $J\subset \{o_1,\ldots, o_d\}\setminus\{o_m\}$,  \begin{align*}
    B_d(o_m; J) &:=\left\{\begin{array}{l}
\text{frogs originally placed on vertices of }\\ 
 \mathbb T_{d}(o_{m}) \text{ never enter subtrees } \mathbb T_{d}(o_j), o_j \in J
\end{array}\right\}\\
    &= \{J(o_m)\subseteq \{o_1,\ldots, o_{m-1}, o_{m+1},\ldots, o_d\}\setminus J\}.
\end{align*} Now for any $J_m \subset \{o_1,\ldots, o_d\}$, let $J_{m,d}^c := \{o_1,\ldots,  o_{m-1}, o_{m+1},\ldots, o_d\}\setminus J_m$, and we write the subscript $d$ to emphasize that the compliment is taken in the case $\SFM(d, p)$. Then,
\begin{align*}
    \{J(o_m)=J_m\} & =\left\{ 
    \begin{array}{c}
    \text{frogs in $\mathbb T_d(o_m)$ have never}\\
    \text{visited any of }(\mathbb T_d (o_{j}))_{o_j \in J_{m,d}^c}
    \end{array}\right\}  \bigcap  \left\{ 
    \begin{array}{c}
    \text{frogs in $\mathbb T_d(o_m)$ {\em have} visited}\\
    \text{every tree in }(\mathbb T_d (o_{j}))_{o_j\in J_m}
    \end{array}\right\} \\
    &= B_d(o_m; J_{m,d}^c)   \bigcap  \left\{ 
    \begin{array}{c}
    \text{frogs in $\mathbb T_d(o_m)$ have not visited}\\
    \text{at least one tree in }(\mathbb T_d (o_{j}))_{o_j\in J_m}
    \end{array}\right\}^{c} \\
    &= B_d(o_m; J_{m,d}^c) \setminus  \left(\bigcup_{a \in J_{m}} B_d(o_m;\{a\}) \right)\\
    &= B_d(o_m; J_{m,d}^c) \setminus   \left(\bigcup_{a \in J_{m}} B_d(o_m;J_{m,d}^{c}\cup \{a\}) \right),
\end{align*}
where we used the fact that
\[
B_d(o_m, A) \cap B_d(o_m, B) = B_d(o_m, A\cup B). 
\]
The union on the right can be re-written using the intersection of these $B$-sets according to the inclusion-exclusion principle, and we thus have
\begin{align*}
\mathbf 1_{\{J(o_m)=J_m\}} &=\mathbf 1_{B_d(o_m; J_{m,d}^c)}  - \sum_{a_1\in J_m} \mathbf 1_{B_d(o_m; J_{m,d}^c\cup \{a_1\})} + \sum_{\substack{a_1,a_2\in J_m\\ a_1\ne a_2}} \mathbf 1_{B_d(o_m; J_{m,d}^c\cup \{a_1, a_2\})}  \\
&\qquad - \sum_{\substack{a_1, a_2, a_3\in J_m\\ \text{distinct}}}\mathbf 1_{B_d(o_m; J_{m,d}^c\cup\{a_1, a_2, a_3\})}+ \cdots + (-1)^{|J_m|}\mathbf 1_{B_d(o_m; J_{m,d}^c\cup J_m)}\\
&= \mathbf 1_{B_d(o_m; J_{m,d}^c)} +  \sum_{\substack{S\subset J_m\\ \text{nonempty}}} (-1)^{|S|}\mathbf 1_{B_d(o_m; J_{m,d}^c\cup S)} = \sum_{S\subset J_m} (-1)^{|S|}\mathbf 1_{B_d(o_m; J_{m,d}^c\cup S)}.
\end{align*}
Therefore, \eqref{eqn:Pdk-visited} can be expressed as
\begin{align*}
     \mathcal P_{d, k}(x) &= \sum_{\substack{J_1\ldots J_k\\ (\star)}}\ \prod_{m=1}^{k}\sum_{S_m \subset J_m}(-1)^{|S_m|}\E\left(x^{V^\ast_{d, m\to \rootvertex}}\mathbf 1_{B_d(o_m; J_{m,d}^c \cup S_m)}|\mathbb T_d(o_m)\text{ is ever visited}\right)\\
     &=\sum_{\substack{J_1\ldots J_k\\(\star)}}\ \prod_{m=1}^{k}\sum_{S_m \subset J_m}(-1)^{|S_m|} g_d\circ c_d^{(d-1-|J_{m,d}^c \cup S_m|)}(x),\quad \text{(by Lemma }\ref{lem:binomial}).
     \end{align*}
 Since  $|J_{m, d}^c|+|J_m|= d-1$ by definition and $J_{m,d}^c$ and $S_m$ are disjoint, we have
 \[
d-1-|J_{m,d}^c \cup S_m| = d-1-|J_{m,d}^{c}|-|S_{m}|=|J_{m}|-|S_{m}|
\]
and it then follows
     \begin{align*}
      \mathcal P_{d, k}(x) &=\sum_{\substack{J_1,\ldots, J_k\\ (\star)}}\prod_{m=1}^{k}\sum_{S_m \subset J_m}(-1)^{|S_m|} g_d\circ c_d^{(|J_{m}|-|S_m|)}(x),
\end{align*}
Since $J_m\subset \{o_1,\ldots, o_k\}\setminus\{o_m\}$ then we have $0\le |J_m|, |J_{m}|-|S_{m}|\le k-1$. The above is a polynomial of $g_{d}\circ c_{d}^{(i)}(x)$ for $i=0, 1,\ldots, k-1$.
Now, following exactly the same argument, we can also get
\begin{align*}
     \mathcal P_{d+1, k}(x) &= \E\left(x^{\sum_{j=1}^k V_{d+1, j\to \rootvertex}^{\ast}}\mathbf 1_{A_{d+1, k}} | f_\rootvertex \to o_1, D_1\right)\\
     &=\sum_{\substack{J_1,\ldots, J_k\\ (\star\star)}}\prod_{m=1}^{k}\sum_{S_m \subset J_m}(-1)^{|S_m|} g_{d+1}\circ c_{d+1}^{(d+1-1-|J_{m,d+1}^c \cup S_m|)}(x)\\
     &=\sum_{\substack{J_1,\ldots, J_k\\ (\star\star)}}\prod_{m=1}^{k}\sum_{S_m \subset J_m}(-1)^{|S_m|} g_{d+1}\circ c_{d+1}^{(|J_{m}| - |S_m|)}(x)
\end{align*}
where the condition for $(J_1, \ldots, J_k)$ to satisfy in the summation $(\star \star)$ is the same as that in \eqref{eqn:Pdk-visited}: this is because on $A_{d+1, k}$ when only subtrees $\mathbb T_d(o_1),\ldots, \mathbb T_d(o_k)$ are activated, the possible choices for $J_1, \ldots, J_k$ are still restricted to those subsets of $\{o_1,\ldots, o_k\}$ such that \eqref{eqn:Adk-condition} is satisfied. The only difference is that, for the $\SFM(d+1, p)$ case, the complement for each $J_m$ needs to be taken with respect to the set $\{o_1,\ldots, o_{m-1}, o_{m+1},\ldots, o_{d+1}\}$, that is
\[
J_{m, d+1}^c = \{o_1,\ldots, o_{m-1}, o_{m+1},\ldots, o_{d+1}\} \setminus J_m,
\]
and so now we have $|J_{m, d+1}^c| + |J_m| = d$.  This suggests that $\mathcal P_{d+1,k}(x)$ can be written as a polynomial of $g_{d+1}\circ c_{d+1}^{(i)}(x)$ for $0\le i \le k-1$.

We can now conclude that, by comparing the expression of $\mathcal P_{d,k}(x)$ and that of  $\mathcal P_{d+1, k}(x)$, if 
$
P_k(z_1,\ldots, z_k) 
$ is a polynomial in $(z_1,\ldots, z_k)$ such that
\[
\mathcal P_{d,k}(x) = P_k(g_d\circ c_d^{(0)}(x),\ldots, g_d\circ c_d^{(k-1)}(x)), 
\]
then replacing the variables from $\big(g_{d}\circ c_{d}^{(i)}(x)\big)_{0\le i\le k-1}$ to $\big (g_{d+1}\circ c_{d+1}^{(i)}(x)\big)_{0\le i\le k-1}$, we can obtain
\[
\mathcal P_{d+1,k}(x) = P_k(g_{d+1}\circ c_{d+1}^{(0)}(x),\ldots, g_{d+1}\circ c_{d+1}^{(k-1)}(x)). 
\]
The proof for the $Q$-polynomials follows essentially the same arguments, except that for the $Q$-polybomials, condition \eqref{eqn:Adk-condition} would become
\[
\bigcup_{m=0}^{k-1} J^{(m)}(\{o_1, o_2\}) = \{o_1,\ldots, o_k\}.
\]
We thus omit the details. 
\end{proof}
\begin{proof}[Proof of Proposition \ref{prop:recursive}] Having shown that $\mathcal P_{d,k}(x)$ and $\mathcal Q_{d,k}(x)$ are polynomials of $g_{d}\circ c_{d}^{(i)}(x)$, $i=0, \ldots, (k-1)$, it suffices to prove that these polynomials $P_{k}: \mathbb R^{k} \to \mathbb R$ and $Q_{k}:\mathbb R^{k}\to \mathbb R$, which determine $\mathcal P_{d,k}(x)$, $\mathcal Q_{d,k}(x)$ via \eqref{eqn:relation1}, \eqref{eqn:relation2},  satisfy the recursive relations \eqref{eqn:p-recursion} and \eqref{eqn:q-recursion}.
Again, since the arguments for proving these two equations are essentially the same, we only provide the details for the proof of \eqref{eqn:p-recursion}. In view of Lemma \ref{lem:first-d-terms}, it suffices to show that for $d = k+1$, we have
\begin{align}
\label{eqn:lem10-to-prove}
\mathcal P_{k+1, k+1}(x) = [g_{k+1}\circ c_{k+1}^{(k)}(x)]^{k+1} - \sum_{l=1}^k\binom{k}{l-1} [g_{k+1}\circ c_{k+1}^{(k)}(x)]^{k+1-l}\mathcal P_{k+1, l}(x). 
\end{align}
Thus if $P_{k+1}$ is defined according to \eqref{eqn:p-recursion}, then for all $d\ge k+1$, 
\[
\mathcal P_{d, k+1}(x)=P_{k+1}\big(g_{d}\circ c_{d}^{(0)}(x), \ldots, g_{d}\circ c_{d}^{(k)}(x)\big).
\]
To do this, we would like to add to the self-similar frog model $\SFM(k+1, p)$ a second stage of frog re-activation. 
\begin{itemize}
    \item Stage I: Run an ordinary $\SFM(k+1, p)$, starting from one activated frog placed at the root vertex. For $i=1,2,\ldots$, let $\tau_i$ be the first time that $o_i$ is visited by some active frog. Set $\tau_i=\infty$ if $o_i$ is never visited {\em in this stage}. \vspace{1mm}
    \item Stage II: For {\bf each} $i=1,2,\ldots, k+1$, if $\tau_i=\infty$, introduce another activated frog at $\rootvertex'$, have it move to $o_i$ and wake up the sleeping frog there. These activated  frogs then perform independent non-backtracking random walks and activate sleeping frogs according to the rules of $\SFM(k+1, p)$.
\end{itemize}
We call this process a re-activated self-similar frog model (rSFM) and denote by $\tilde V_{k+1, i\to \rootvertex}$ the number of visits to the root vertex $\rootvertex$ made by frogs initiated in the subtree $\mathbb T_{k+1}(o_i)$ via $\rootvertex'$. Let $\mathcal L\subseteq\{o_1,\ldots, o_{k+1}\}$ be the branches activated in Stage I and set $L:=|\mathcal L|$, that is
\[
L = \sum_{i=1}^d \mathbf 1\{\tau_i < \infty\}.
\]
Since the first stage is an ordinary $\SFM(k+1,p)$, we have
\begin{align*}
& \ \ \mathcal P_{k+1, k+1}(x) = \E \left(x^{\sum_{i=1}^{k+1}V^{\ast}_{k+1, i\to \rootvertex}} \mathbf 1_{A_{k+1, k+1}}| f_{\rootvertex} \to o_{1}, D_{1}\right)\\
&=\E \left(x^{\sum_{i=1}^{k+1}\tilde V_{k+1, i\to \rootvertex}} \mathbf 1_{\{L=k+1\}}| f_{\rootvertex} \to o_{1}, D_{1}\right)\\
&=\E \left(x^{\sum_{i=1}^{k+1}\tilde V_{k+1, i\to \rootvertex}}| f_{\rootvertex} \to o_{1}, D_{1}\right) - \sum_{l=1}^{k}\E \left(x^{\sum_{i=1}^{k+1}\tilde V_{k+1, i\to \rootvertex}} \mathbf 1_{\{L=l\}}| f_{\rootvertex} \to o_{1}, D_{1}\right)\\
&=\E \left(x^{\sum_{i=1}^{k+1}\tilde V_{k+1, i\to \rootvertex}}| f_{\rootvertex} \to o_{1}, D_{1}\right) - \sum_{l=1}^{k}\binom{k}{l-1}\E \left(x^{\sum_{i=1}^{k+1}\tilde V_{k+1, i\to \rootvertex}} \mathbf 1_{\{\mathcal L = \{o_{1}, \ldots, o_{l}\}\}}| f_{\rootvertex} \to o_{1}, D_{1}\right).
\end{align*}
In rSFM,  all branches attached to $\rootvertex'$ are activated, either in Stage I or in Stage II. Moreover, since only one frog is allowed to enter each subtree $\mathbb T_{d}(o_{i})$, we still have the self-similar structure that  the number of visits from $o_i$ to $\rootvertex'$ is an independent copy of $V^\ast_d$; it has nothing to do with the way the subtree $\mathbb T_{k+1}(o_i)$ is activated (i.e., by which frog or in which stage). It follows that for $n\ge 1$
$$\left(\tilde V_{k+1, i\to \rootvertex}|\tilde V_{k+1, i\to \rootvertex'}=n\right) \sim \text{Bin}\left(n,\frac{p}{p+\frac{k}{k+1}(1-p)} \right).$$
Following the same computation as in the SFM case (Lemma \ref{lem:binomial}), we get in rSFM 
$$\E \left(x^{\tilde V_{k+1, i\to \rootvertex}}\right) = g_{k+1}\circ c_{k+1}^{(k)}(x), \quad \text{for all } i=1,2,\ldots k+1.$$
Therefore,
\begin{align*}
& \mathcal P_{k+1, k+1}(x) \\
&=\E \left(x^{\sum_{i=1}^{k+1}\tilde V_{k+1, i\to \rootvertex}}\right) \\
&\qquad - \sum_{l=1}^{k}\binom{k}{l-1}\E \left(x^{\sum_{i=1}^{l}\tilde V_{k+1, i\to \rootvertex}} \mathbf 1_{\{\mathcal L = \{o_{1}, \ldots, o_{l}\}\}}| f_{\rootvertex} \to o_{1}, D_{1}\right)\E(x^{\sum_{i=l+1}^{k+1} \tilde V_{k+1,i\to \rootvertex}})\\
&=[g_{k+1}\circ c_{k+1}^{(k)}(x)]^{k+1}\\
&\qquad - \sum_{l=1}^{k}\binom{k}{l-1}\E \left(x^{\sum_{i=1}^{l}V^{\ast}_{k+1, i\to \rootvertex}} \mathbf 1_{A_{d,l}}| f_{\rootvertex} \to o_{1}, D_{1}\right)[g_{k+1}\circ c_{k+1}^{(k)}(x)]^{k-l+1},
\end{align*}
which is exactly \eqref{eqn:lem10-to-prove}.
\end{proof}

\section{The recurrence of $\FM(d, 1/3)$ \label{sec:propertyAd}}
In this section, we prove Propostion \ref{prop:key}, i.e., $\SFM(d, p)$ for $p^\ast(d, 1/3):=\frac{d-1}{2d-1}$ is recurrent. By Proposition \ref{prop:keycompare}, this implies $\FM(d, 1/3)$ is recurrent, which finishes the proof of Theorem \ref{thm:upbd}. The special case that $\SFM(2, 1/3)$ is recurrent was proved in \cite{HJJ1}, and our goal is to show that for any $d\ge 3$, $\SFM(d, \frac{d-1}{2d-1})$ is at least ``as  recurrent'' as $\SFM(2, 1/3)$. Throughout this section, the drift parameter is always set to $p=p^\ast(d, 1/3) = \frac{d-1}{2d-1}$. Since the model $\SFM(d, \frac{d-1}{2d-1})$ is parametrized by $d$ only, we abbreviate the notation $X_{d,
\frac{d-1}{2d-1}}$ as $X_{d}$, for $X=g, \mathcal A, V^{\ast}$ or $c^{(k)}$. 

With $p=\frac{d-1}{2d-1}$, the linear functions $c_{d}^{(k)}(x)$ become
\begin{align}
\label{eqn:specialC}
    c_d^{(k)}(x) = c_{d, \frac{d-1}{2d-1}}^{(k)}(x)= \frac{\frac{(d-1)x}{2d-1} + \frac{k}{2d-1}}{\frac{d-1}{2d-1}+ \frac{d-1}{2d-1}} = \frac{x}{2}+ \frac{k}{2(d-1)}\quad \text{for } k = 0, 1, \ldots, (d-1).
\end{align}
The operator $\mathcal A_d$ on $\mathcal I=\{f: [0, 1) \to [0, 1], \text{ nondecreasing}\}$ can be defined according to \eqref{eq:Adp-def}:  $\forall h \in \mathcal I$, $\mathcal A_d = \mathcal A_{d, \frac{d-1}{2d-1}}$ maps $h$ to 
\begin{align}
  \notag
\mathcal A_d h (x) &:=  \frac{(d-1)x + 1}{2d-1} \sum_{k=1}^d \binom{d-1}{k-1} P_k\left(h\left(\frac{x}{2}\right),\ldots, h\left(\frac{x}{2}+\frac{k-1}{2(d-1)}\right)\right) \\ 
 \label{eqn:def-Ad}
 &\quad +  \frac{d-1}{2d-1} \sum_{k=2}^d \binom{d-2}{k-2} Q_k\left(h\left(\frac{x}{2}\right),\ldots, h\left(\frac{x}{2}+\frac{k-1}{2(d-1)}\right)\right)
\end{align}
and $g_d \in \mathcal I$ is a fixed point of $\mathcal A_d$ by Proposition \ref{prop:recursive}.

In the base case $d=2$ and $\frac{d-1}{2d-1}=\frac{1}{3}$, the operator $\mathcal A_2$ maps any function $h\in \mathcal I$ to $\mathcal A_{2}h$, defined as 
\begin{align*}
\mathcal A_2 h (x) &:= \frac{x+2}{3} h\left(\frac{x+1}{2}\right)^2+ \frac{x+1}{3}h\left(\frac{x}{2}\right)\left[1-h\left(\frac{x+1}{2}\right)\right],
\end{align*}
and $g_2(x) := \E (x^{V^\ast_{2}}) \in \mathcal I$  is a 
fixed point of $\mathcal A_2$. It was shown in \cite{HJJ1} that this operator $\mathcal A_2$ exhibits a few nice properties:
\begin{itemize}
    \item (Closed) For any $h\in \mathcal I$, $\mathcal A_2h \in \mathcal I$; 
    \item (Monotone) For any $h_1, h_2 \in \mathcal I$ with $h_1\le h_2$,  $\mathcal A_2 h_1 \le \mathcal A_2 h_2$;
    \item (Vanishing) $\lim_{n\to \infty} \mathcal A_2^n 1 = 0$.
\end{itemize}
It follows that 
$
g_{2} = \mathcal A_2^n g_2 \le \mathcal A_2^n 1 \to 0
$, meaning  $g_{2}\equiv 0$ and $V^\ast_{2} = \infty$ almost surely. This proves that SFM$(2, 1/3)$ is recurrent.

\begin{prop}\label{prop:Ad-A2}
For any $d \ge 2$, let $g_d(x)=\E(x^{V_{d}^{\ast}})$ be the probability generating function of $V_{d}^\ast$, the number of visits to the root in $\SFM(d, \frac{d-1}{2d-1})$. Then $\mathcal A_{d} g_d \le \mathcal A_2 g_d$, where $\mathcal A_{d}$ is an operator, whose domain is $\mathcal I=\{f: [0,1)\to[0,1], \text{nondecreasing}\}$, defined by \eqref{eqn:def-Ad}.
\end{prop}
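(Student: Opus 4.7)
The proposition asserts a polynomial inequality between two explicit expressions, both evaluated at $x \in [0, 1)$. I would set $y_k := g_d(c_d^{(k-1)}(x)) = g_d\!\left(\tfrac{x}{2} + \tfrac{k-1}{2(d-1)}\right)$ for $k = 1, \ldots, d$. Nondecreasing monotonicity of $g_d$ gives $y_1 \le \cdots \le y_d$, and the endpoints $y_1 = g_d(x/2)$, $y_d = g_d((x+1)/2)$ match the arguments of $\mathcal A_2 g_d(x) = \frac{x+2}{3} y_d^2 + \frac{x+1}{3} y_1(1 - y_d)$. The claim thus reduces to a polynomial inequality in $(x, y_1, \ldots, y_d)$ under additional structural constraints inherited from $g_d$.

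First I would establish the two closed-form identities
\[
y_d^d = \sum_{l=1}^d \binom{d-1}{l-1} y_d^{d-l} P_l(y_1,\ldots, y_l), \qquad y_d^d = \sum_{l=2}^d \binom{d-2}{l-2} y_d^{d-l} Q_l(y_1,\ldots, y_l),
\]
which follow by induction on $d$ directly from the recursions \eqref{eqn:p-recursion}--\eqref{eqn:q-recursion}. Using them to extract the top-order contribution of $\mathcal A_d g_d(x)$ produces a coefficient $\frac{(d-1)x+d}{2d-1}$ in front of $y_d^d$. Since $\frac{(d-1)x+d}{2d-1} \le \frac{x+2}{3}$ for all $x \le 1$ (reducing to $(d-2)(x-1)\le 0$) and $y_d^d \le y_d^2$ for $y_d \in [0,1]$, this yields genuine slack against the $\frac{x+2}{3} y_d^2$ leading term on the right-hand side.

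The main obstacle is that monotonicity alone is insufficient: one can check that taking, at $x = 0$ in the $d=3$ case, $(y_1,y_2,y_3)=(0,\tfrac12,\tfrac12)$ gives $\mathcal A_3 g_3(0) = \tfrac{7}{40} > \tfrac{1}{6} = \mathcal A_2 g_3(0)$. Hence the proof must exploit further structure of $g_d$. The natural candidate is convexity: as a probability generating function $g_d(x) = \sum_n p_n x^n$ with $p_n \ge 0$, $g_d$ is convex on $[0,1)$; and because $c_d^{(0)}(x), \ldots, c_d^{(d-1)}(x)$ form an arithmetic progression of length $d$ uniformly partitioning $[x/2,(x+1)/2]$, convexity yields the secant bound $y_k \le y_1 + \frac{k-1}{d-1}(y_d - y_1)$. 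The above counterexample is concave (not convex), consistent with convexity being the correct extra hypothesis.

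I would close the argument by inserting the convexity bound on the middle $y_k$'s into the lower-order $P_l, Q_l$ terms and reducing to an inequality in $(x,y_1,y_d)$ alone, which I expect can be verified directly after combining with the leading-order slack. The technical difficulty is that $P_l, Q_l$ involve alternating signs in the $y_k$'s (e.g. $P_3 = z_3^3 - z_3^2 z_1 - 2z_3 z_2(z_2-z_1)$), so substituting the convexity bound requires careful bookkeeping to ensure that the inequality direction is preserved term by term. I anticipate that the specific calibration $p^* = (d-1)/(2d-1)$, which forces the arithmetic progression to be uniform on $[x/2,(x+1)/2]$, is precisely what aligns the combinatorial coefficients in the $P_l$ and $Q_l$ sums with the convexity substitution so that the signs cooperate; this is the step I expect to absorb most of the work in a complete proof.
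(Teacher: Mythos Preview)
You correctly identify the central obstacle: monotonicity of the sequence $y_1\le\cdots\le y_d$ is not enough, and your $d=3$ counterexample is valid. But the extra structure the paper exploits is \emph{not} convexity of $g_d$; it is the probabilistic interpretation of the polynomials themselves. By relations \eqref{eqn:relation1}--\eqref{eqn:relation2} (Lemma~\ref{lem:first-d-terms}), for the specific arguments $z_l=g_d\circ c_d^{(l-1)}(x)$ one has
\[
P_l(z_1,\dots,z_l)=\mathcal P_{d,l}(x)\ge 0,\qquad Q_l(z_1,\dots,z_l)=\mathcal Q_{d,l}(x)\ge 0,
\]
since each $\mathcal P_{d,l},\mathcal Q_{d,l}$ is the conditional expectation of a nonnegative random variable (see \eqref{eqn:def-Pdk}--\eqref{eqn:def-Qdk}). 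In your counterexample $(0,\tfrac12,\tfrac12)$ one computes $P_3=-\tfrac18<0$, so that tuple simply cannot arise from any genuine $g_d$; no appeal to convexity is needed to exclude it.

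With this nonnegativity in hand, the paper's argument is short and direct. Write the recursion as $P_d=z_d\bigl[z_d^{d-1}-\sum_{l\le d-1}\binom{d-1}{l-1}z_d^{d-1-l}P_l\bigr]$. Since the left side is $\ge 0$ and $z_d\in[0,1]$, the bracket is $\ge 0$, hence $P_d\le z_d^{d-1}-\sum_{l\le d-1}\binom{d-1}{l-1}z_d^{d-1-l}P_l$. Peeling off the $l=d-1$ term (which is $\ge 0$) and repeating the same ``factor out $z_d$, replace by $1$'' step down to degree $2$ yields
\[
\sum_{l=1}^d\binom{d-1}{l-1}P_l(z_1,\dots,z_l)\le z_1+z_d^2-z_dz_1,\qquad
\sum_{l=2}^d\binom{d-2}{l-2}Q_l(z_1,\dots,z_l)\le z_d^2.
\]
Combined with the elementary coefficient inequality $\frac{(d-1)x+1}{2d-1}\le\frac{x+1}{3}$ (equivalently $(d-2)(x-2)\le 0$) and $z_1(1-z_d)\ge 0$, this gives $\mathcal A_d g_d(x)\le \mathcal A_2 g_d(x)$ immediately. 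Your convexity route may or may not close; it is at best speculative, whereas the nonnegativity of the $P_l,Q_l$ at the true $z$-values is exactly the missing ingredient that makes the telescoping work without any sign bookkeeping in the intermediate $y_k$'s.
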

Proposition \ref{prop:Ad-A2} indicates that all self-similar frog models $\SFM(d, \frac{d-1}{2d-1})$ are at least as recurrent as $\SFM(2, 1/3)$.  To see this, we make use of the fact that $g_{d}$ is a fixed point of $\mathcal A_{d}$ and the three properties of $\mathcal A_{2}$ repeatedly:
\[
g_d = \mathcal A_d g_d \le \mathcal A_2  g_d = \mathcal A_2(\mathcal A_d g_d) \le \mathcal A_2 \mathcal A_2 g_d \le \cdots\le  \mathcal A_2^{n} g_d\le \mathcal A_2^n 1 \to 0,
\]
which implies $g_{d}\equiv 0$ and $V^\ast_{d}=\infty$ almost surely. 

\begin{proof}[Proof of Proposition \ref{prop:Ad-A2}] Fix $d \ge 2$ and consider the last conditional expectations $\mathcal P_{d,d}$ and $\mathcal Q_{d,d}$ in the self-consistency equation of $g_d$. To simplify the notation, we fixed an arbitrary $x\in [0,1)$ and write 
\[
z_k = g_d\circ c_d^{(k-1)}(x) = g_d\left(\frac{x}{2} + \frac{k-1}{2(d-1)}\right), \quad k=1,2\ldots, d.
\]
Then $(z_k)_{1\le k \le d}$ is an increasing sequence in $[0, 1]$.  By Proposition \ref{prop:recursive} and Lemma \ref{lem:first-d-terms},
\begin{align*}
    0 &\le \E\left(x^{\sum_{j=1}^d V_{d,j\to \rootvertex}^\ast}\mathbf 1_{A_{d, d}} | f_\rootvertex \to o_1, D_1\right) = \mathcal P_{d, d}(x) = P_d(z_1,\ldots, z_d)\\
    &= z_{d}^{d} - \sum_{l=1}^{d-1} \binom{d-1}{l-1}z_{d}^{d-l}P_{l}(z_1,\ldots, z_l)\\
    &= z_d \left[z_{d}^{d-1} - \sum_{l=1}^{d-1} \binom{d-1}{l-1}z_{d}^{d-1-l}P_{l}(z_1,\ldots, z_l)\right]\\
    &\le z_{d}^{d-1} - \sum_{l=1}^{d-1} \binom{d-1}{l-1}z_{d}^{d-1-l}P_{l}(z_1,\ldots, z_l) \quad (\text{replace the  factor $z_d$ outside by 1})\\
    &= z_{d}^{d-1} - \sum_{l=1}^{d-2} \binom{d-1}{l-1}z_{d}^{d-1-l}P_{l}(z_1,\ldots, z_l) - \binom{d-1}{d-2}P_{d-1}(z_1, \ldots, z_{d-1})\\
    &\le z_{d}^{d-2} - \sum_{l=1}^{d-3} \binom{d-1}{l-1}z_{d}^{d-2-l}P_{l}(z_1,\ldots, z_l) - \binom{d-1}{d-3}P_{d-2}(z_1,\ldots, z_{d-2}) -  \binom{d-1}{d-2}P_{d-1}(z_1, \ldots, z_{d-1})\\
    &\le \cdots \le z_d^2 - z_d P_1(z_1) -\sum_{l=2}^{d-1}\binom{d-1}{l-1} P_l(z_1, \ldots, z_l)\\
    &=z_d^2 - z_d z_1 - \sum_{l=2}^{d-1}\binom{d-1}{l-1} P_l(z_1, \ldots, z_l),
\end{align*}
where we keep plugging out the common factor $z_{d}$ and replacing it by 1. This means 
\[
\sum_{l=1}^d\binom{d-1}{l-1} P_l(z_1,\ldots, z_l) = z_1 + \sum_{l=2}^{d-1}\binom{d-1}{l-1} P_l(z_1, \ldots, z_l)  + P_d(z_1, \ldots, z_d)  \le z_1+ z_d^2 - z_dz_1. 
\]
We can get similar result for the $Q$-polynomials following the same strategy:
\[
\sum_{k=2}^d\binom{d-2}{k-2}Q_l(z_1,\ldots, z_l)\le z_d^2.
\]
Combining the above two inequalities, we get
\begin{align*}
    \mathcal A_d g_d(x) &=\frac{(d-1)x+1}{2d-1}\sum_{k=1}^d \binom{d-1}{k-1}P_k(z_1,\ldots, z_k) + \frac{d-1}{2d-1} \sum_{k=2}^d \binom{d-2}{k-2} Q_k\left(z_1,\ldots, z_k\right)\\
    &\le \frac{(d-1)x+1}{2d-1} (z_1 + z_d^2 - z_d z_1 )  + \frac{d-1}{2d-1} z_d^2.
\end{align*}
Noticing that for $x\in [0,1)$ and $d\ge 2$
\begin{align*}
\frac{(d-1)x+1}{2d-1} - \frac{x+1}{3} &= \frac{(3d-3)x+3 - (2d-1)x - (2d-1)}{3(2d-1)} \\
&=\frac{(d-2)(x-2)}{3(2d-1)} \le \frac{-(d-2)}{3(2d-1)} = \frac{1}{3} - \frac{d-1}{2d-1},
\end{align*}
then
\begin{align*}
    \mathcal A_d g_d(x) &\le \frac{x+1}{3} (z_1 + z_d^2 - z_dz_1) + \frac{1}{3} z_d^2 -\frac{d-2}{3(2d-1)}(z_1+z_d^2 - z_dz_1 -z_d^2)\\
    &\le \frac{x+1}{3} (z_1 + z_d^2 - z_dz_1) + \frac{1}{3} z_d^2.
\end{align*}
Observe that for all $d\ge 2$, the exact forms of the first and last linear functions $c_{d}^{(0)}$ and $c_{d}^{(d-1)}$ are the same for all $d\ge 2$; see equation \eqref{eqn:specialC}:
\[
c_d^{(0)}(x) = \frac{x}{2} \quad \text{and} \quad  c_d^{(d-1)}(x) = \frac{x+1}{2}.
\]
This gives
\[
z_1 = g_d\left(\frac{x}{2}\right) \quad \text{and }\quad z_d = g_d\left(\frac{x+1}{2}\right),
\]
and thus 
\[
 \mathcal A_d g_d(x) \le \frac{x+2}{3} g_{d}\left(\frac{x+1}{2}\right)^2+ \frac{x+1}{3}g_{d}\left(\frac{x}{2}\right)\left[1-g_{d}\left(\frac{x+1}{2}\right)\right] = \mathcal A_{2}g_{d}(x).
\]
The above holds for any $x\in [0, 1)$, and thus the proof of Proposition \ref{prop:Ad-A2} is complete.
\end{proof}

\bibliographystyle{amsalpha}
\bibliography{frog_paper_cover.bib}

\newcommand{\etalchar}[1]{$^{#1}$}
\providecommand{\bysame}{\leavevmode\hbox to3em{\hrulefill}\thinspace}
\providecommand{\MR}{\relax\ifhmode\unskip\space\fi MR }
\providecommand{\MRhref}[2]{%
  \href{http://www.ams.org/mathscinet-getitem?mr=#1}{#2}
}
\providecommand{\href}[2]{#2}
\begin{thebibliography}{DGH{\etalchar{+}}17}

\bibitem[BFJ{\etalchar{+}}19]{ECP19-frog}
Erin Beckman, Natalie Frank, Yufeng Jiang, Matthew Junge, and Si~Tang,
  \emph{The frog model on trees with drift}, Electron. Commun. Probab.
  \textbf{24} (2019), no.~26, 1--10. \MR{3962476}

\bibitem[DGH{\etalchar{+}}17]{nina_drift2}
Christian D{\"o}bler, Nina Gantert, Thomas H{\"o}felsauer, Serguei Popov, and
  Felizitas Weidner, \emph{{Recurrence and Transience of Frogs with Drift on
  $\mathbb{Z}^d$}}, available at arXiv:1709.00038, 2017.

\bibitem[DP14]{dobler_drift}
Christian D\"obler and Lorenz Pfeifroth, \emph{Recurrence for the frog model
  with drift on {$\mathbb Z^d$}}, Electron. Commun. Probab. \textbf{19} (2014),
  no. 79, 13. \MR{3283610}

\bibitem[FMS04]{mono}
L.~R. Fontes, F.~P. Machado, and A.~Sarkar, \emph{The critical probability for
  the frog model is not a monotonic function of the graph}, Journal of Applied
  Probability \textbf{41} (2004), no.~1, 292--298.

\bibitem[GNR17]{ghosh_drift}
Arka Ghosh, Steven Noren, and Alexander Roitershtein, \emph{On the range of the
  transient frog model on {$\mathbb{Z}$}}, Adv. in Appl. Probab. \textbf{49}
  (2017), no.~2, 327--343. \MR{3668379}

\bibitem[GS09]{nina_drift1}
Nina Gantert and Philipp Schmidt, \emph{Recurrence for the frog model with
  drift on {$\mathbb Z$}}, Markov Process. Related Fields \textbf{15} (2009),
  no.~1, 51--58. \MR{2509423 (2010g:60170)}

\bibitem[HJJ16]{HJJ2}
Christopher Hoffman, Tobias Johnson, and Matthew Junge, \emph{From transience
  to recurrence with {P}oisson tree frogs}, Ann.\ Appl.\ Probab. \textbf{26}
  (2016), no.~3, 1620--1635. \MR{3513600}

\bibitem[HJJ17]{HJJ1}
\bysame, \emph{Recurrence and transience for the frog model on trees}, Ann.
  Probab. \textbf{45} (2017), no.~5, 2826--2854. \MR{3706732}

\bibitem[JJ16a]{JJ3_log}
Tobias Johnson and Matthew Junge, \emph{The critical density for the frog model
  is the degree of the tree}, Electron.\ Commun.\ Probab. \textbf{21} (2016),
  Paper No. 82, 12. \MR{3580451}

\bibitem[JJ16b]{JJ3_order}
\bysame, \emph{Stochastic orders and the frog model}, to appear in
  \emph{Annales de l'Institut Henri Poincar\'e}, available at arXiv:1602.04411,
  2016.

\bibitem[KZ17]{kosygina01}
Elena Kosygina and Martin P.~W. Zerner, \emph{A zero-one law for recurrence and
  transience of frog processes}, Probability Theory and Related Fields
  \textbf{168} (2017), no.~1, 317--346.

\bibitem[LMP05]{improved}
{\'E}lcio Lebensztayn, F{\'a}bio~P Machado, and Serguei Popov, \emph{An
  improved upper bound for the critical probability of the frog model on
  homogeneous trees}, Journal of statistical physics \textbf{119} (2005),
  no.~1-2, 331--345.

\bibitem[Ros17]{josh_32}
Josh Rosenberg, \emph{Recurrence of the frog model on the 3,2-alternating
  tree}, available at arXiv:1701.02813, 2017.

\end{thebibliography}

\end{document}